\documentclass[twoside,a4paper,12pt]{article}
\usepackage[top=4.5cm, bottom=4.5cm, inner=3.5cm,outer=3.5cm]{geometry}
\usepackage[utf8]{inputenc}
\usepackage{amsthm}
\usepackage{mathtools}
\usepackage{csquotes}
\usepackage{amsmath}
\usepackage{graphicx}
\usepackage{dsfont}
\usepackage{soul}
\usepackage{amssymb}
\usepackage{multicol}
\usepackage{courier}
\usepackage{listings}
\usepackage{eurosym}
\usepackage{subfiles}
\usepackage{multicol}
\setlength{\columnsep}{10pt}
\usepackage{pdfpages}
\usepackage{footnote}
\usepackage{color}
\definecolor{myblue}{rgb}{.25, .25, .9}
\definecolor{myred}{rgb}{.6, .3, .3}
\definecolor{myred2}{rgb}{.6, .1, .1}
\definecolor{mygreen}{rgb}{.25, .6, .5}
\usepackage{adjustbox}
\usepackage[colorlinks=true,
            linkcolor=myred2,
            urlcolor=myred,
            citecolor=myred]{hyperref}
\usepackage{pdfpages}
\usepackage[english]{babel}
\usepackage{array}
\usepackage{enumerate}
\usepackage{gensymb}
\usepackage[nottoc]{tocbibind}
\usepackage{lettrine}
\usepackage[normalem]{ulem}
\usepackage[font=small,labelfont=bf]{caption}
\usepackage{cancel}
\usepackage{nomencl}
\usepackage{mathrsfs}
\usepackage{changepage}
\usepackage{amsmath,stmaryrd,graphicx,float}
\numberwithin{equation}{section}

\usepackage{tikz-cd}
\usetikzlibrary{arrows}

\usepackage{subcaption} 

\usepackage{titlesec}




\newtheoremstyle{mystyle}
  {}
  {}
  {\itshape}
  {}
  {\bfseries}
  {.}
  { }
  {}

\theoremstyle{mystyle}

\newtheorem{theorem}{Theorem}[section]

\newtheorem{definition}[theorem]{Definition}

\newtheorem{proposition}[theorem]{Proposition}

\newtheorem{assumption}[theorem]{Assumption} 

\newtheorem{example}[theorem]{Example}
\newtheorem{remark}[theorem]{Remark}

\newcommand\xqed[1]{%
  \leavevmode\unskip\penalty9999 \hbox{}\nobreak\hfill
  \quad\hbox{#1}}
\newcommand\exampleEnd{\xqed{$\circ$}}
\newcommand\remarkEnd{\xqed{$\circ$}}

\usepackage{etoolbox}

\DeclareMathAlphabet{\mathbbold}{U}{bbold}{m}{n}

\titleformat{\section}[runin]
{\bfseries}{\llap{\thesection\hskip 9pt}}{0pt}{}
\titlespacing*{\section}
{0pt}{3.25ex plus 1ex minus .2ex}{1.5ex plus .2ex}

\titleformat{\subsection}[runin]
{\bfseries}{\llap{\thesubsection\hskip 9pt}}{0pt}{}
\titlespacing*{\subsection}
{0pt}{3.25ex plus 1ex minus .2ex}{1.5ex plus .2ex}


\titleformat{\subsubsection}[runin]
{\bfseries}{\llap{\thesubsubsection\hskip 9pt}}{0pt}{}
\titlespacing*{\subsubsection}
{0pt}{3.25ex plus 1ex minus .2ex}{1.5ex plus .2ex}

\setcounter{secnumdepth}{5}
\setcounter{tocdepth}{5}

\titleformat{\paragraph}[runin]
{\bfseries}{\llap{\theparagraph\hskip 9pt}}{0pt}{}
\titlespacing*{\paragraph}
{0pt}{3.25ex plus 1ex minus .2ex}{1.5ex plus .2ex}


\usepackage[titles]{tocloft}



\usepackage{fancyhdr}
\pagestyle{fancy}
\usepackage{lastpage}
\fancypagestyle{plain}{
\fancyhf{}
\fancyhead[LE]{\thepage\quad\textbf{\nouppercase\leftmark}}
\fancyhead[RO]{\textbf{\nouppercase\rightmark}\quad\thepage}
}

\pagestyle{plain}


\fancypagestyle{basicstyle}{%
\fancyhf{}
\fancyhead[LE]{\thepage}
\fancyhead[RO]{\thepage}
}


\usepackage{algorithm}
\usepackage{algorithmic}[1]

\usepackage{chngcntr}
\counterwithin{figure}{section}


\usepackage[backend=bibtex,
maxbibnames=99,
style=alphabetic,
bibencoding=ascii,
giveninits=true, 
]{biblatex}
\addbibresource{bibl}

\renewbibmacro{in:}{}


\usepackage{rotating}
\usepackage{mathtools}

\newcounter{savecntr}

\title{{\textbf{\Large Asymptotic stability equals exponential stability---\textit{while} you twist your eyes}}}
\author{Wouter Jongeneel\setcounter{savecntr}{\value{footnote}}\thanks{The author is with the KTH Royal Institute of Technology and Digital Futures, Stockholm, Sweden. This note was written while the author was visiting the Division of Decision and Control Systems (DCS) at the KTH Royal Institute of Technology. Their extraordinary hospitality is kindly acknowledged. During that visit, the author was supported by the Risk Analytics and Optimization (RAO) chair at EPFL and by the NCCR \textit{Automation} (grant agreement 51NF40\_180545). The author would like to thank Matthew Kvalheim for helpful discussions and feedback. A special thanks to Elise Cahard, Mikael Johansson and Daniel Kuhn. email: \texttt{wouterjo@kth.se}, website: \url{wjongeneel.nl}.}
}
\date{\small{\today}}

\begin{document}
\maketitle
\thispagestyle{empty}

\begin{abstract}
Suppose that two vector fields on a smooth manifold render some equilibrium point globally asymptotically stable (GAS). We show that there exists a homotopy between the corresponding semiflows such that this point remains GAS along this homotopy.
\end{abstract}

{\footnotesize{
\noindent\textbf{\textit{Keywords}}|dynamical systems, homotopy invariant, stability\\
\textbf{\textit{AMS Subject Classification (2020)}}| 37B25, 55P10, 93D20
}}


\section{Introduction}
In the context of what we call today \textit{Conley index theory} (see Section~\ref{sec:Conley}), Conley posed the following ``\textit{converse question}'' in the 1970s:
``\textit{To what extent does the homotopy index \emph{[Conley index]} itself determine the equivalence class of
isolated invariant sets which are related by continuation?}''~\cite[p. 83]{ref:conley1978isolated}. Recently, Kvalheim proved that uniquely integrable $C^0$ vector fields, on a $C^{\infty}$ manifold $M$, rendering a compact set $A\subseteq M$ asymptotically stable, are homotopic on an open neighbourhood $U\supseteq A$ such that throughout the homotopy the vector fields do not vanish on $U\setminus A$~\cite[Thm. 1]{ref:kvalheim2022obstructions}. Connecting this result to Conley, a follow-up question (revitalized) by Kvalheim---which is the central question of this note---is the following.\\\\ \\
\noindent\textbf{{Question:}} ``\textit{Are dynamical systems that render a set $A$ asymptotically stable, homotopic through dynamical systems that preserve this notion of stability?}''\\\\
This question, in one form or another, inspired several works, for instance, \cite{ref:reineck1992continuation,ref:mrozek2000conley,ref:JongeneelSchwan2024TAC}.  Here, we will elaborate on commentary by the author in~\cite{ref:JongeneelSchwan2024TAC}.
 Specifically, in the seminal paper ``\textit{Asymptotic stability equals exponential stability, and ISS equals finite energy gain---if you twist your eyes}'' from 1999, Gr\"une, Sontag and Wirth showed that asymptotic stability ``\textit{equals}'' exponential stability in the sense that if an equilibrium point is asymptotically stable under some vector field on $\mathbb{R}^n$, then, there is a suitable change of coordinates rendering this point exponentially stable~\cite{ref:grune1999asymptotic}. Such a change of coordinates is understood to be \textit{instantaneous}. However, by leveraging their work, we show  that asymptotic stability can be \textit{continuously} ``transformed'' into exponential stability, while preserving asymptotic stability throughout the transformation, see Theorem~\ref{thm:GAS:homotopy:semiflow:M}. Differently put, asymptotic stability equals exponential stability---not only \textit{if} you twist your eyes, but \textit{while} you twist your eyes. The key to this is to observe that we can select the transformation from \cite{ref:grune1999asymptotic} to be an \textit{orientation-preserving} homeomorphism on $\mathbb{R}^n$, not just any homeomorphism \textit{cf}.~\cite[Sec. III]{ref:JongeneelSchwan2024TAC}, see the proof of Proposition~\ref{prop:GAS:homotopy:semiflow}.  
 
This result provides a partial solution to Conley's converse question as it turns out that the asymptotically stable systems under consideration can be continuously transformed into the same exponentially stable system and hence, by transitivity, into each other. Concurrently, we discuss extensions to discontinuous vector fields throughout, {plus we illustrate how to go about extensions to ISS (see Section~\ref{sec:ISS})}. We also discuss intimate connections with optimization and optimal transport (\textit{e.g.}, see Example~\ref{ex:invextoconvex} and~\ref{ex:Gaussian:OT}).  

\subsection{Related work}
It can be argued that questions of the form as above, emerged from studies aimed at classifying manifolds, maps and so forth. A successful, yet coarse, resolution has been found in the study of these objects \textit{up to homotopy}, \textit{e.g.}, motivated early on by the fundamental group being homotopy invariant, Hopf's degree theorem, CW complexes, intractability of topological equivalence and more at the intersection of topology and dynamical systems. 

We cannot do justice here to the wealth of work in this area, but inspired by Andronov, Pontryagin, Thom, Peixoto, Birkhoff, Milnor and several others, it was in particular Smale highlighting that this intersection is fruitful for \textit{both} fields, see \cite{ref:hirsch2012topology} for context. Indeed, after seminal work by Smale on the qualitative classification of dynamical systems~\cite{ref:smale1967differentiable}, homotopy results concerned with \textit{Morse-Smale vector fields} appeared, \textit{e.g.}, see~\cite{ref:asimov1975homotopy,ref:newhouse1976,ref:franks1979morse}. Similarly, one can study if gradient vector fields are homotopic through gradient vector fields~\cite{ref:parusinski1990gradient}, see also~\cite{ref:reineck1991continuation,ref:kvalheim2022obstructions} for work close in spirit to ours. 

In control theory, these tools (\textit{i.e.}, certain homotopy invariants) were used to construct necessary conditions for feedback controllers to exist, \textit{i.e.}, if a desirable (closed-loop) dynamical system belongs to a certain homotopy class, then, there must be at least \textit{some} feedback that renders the control system a member of this class, \textit{e.g.}, see~\cite{ref:brockett1983asymptotic,ref:krasnosel1984geometrical,ref:zabczyk1989,ref:Coron1990}.

These results have in common that the original objects are ``\textit{homotoped}'' to something simple and ``\textit{canonical}'' where we do our analysis. 
For instance, in the context of control systems, when our goal is stabilization of the origin on $\mathbb{R}^n$ through feedback, then, the canonical differential equation is $\dot{x}=-x$ and we would like our closed-loop dynamical system to be in some qualitative sense equivalent to this equation. This is precisely how the ``\textit{index condition}'' by Krasnosel'ski\u{\i} and Zabre\u{\i}ko is derived~\cite[Sec. 52]{ref:krasnosel1984geometrical}. Clearly, necessary conditions of this form are only as valuable as the homotopy class is distinctive, \textit{e.g.}, although the index condition is powerful, it cannot differentiate between $\dot{x}=x$ and $\dot{x}=-x$ on $\mathbb{R}^2$. This is precisely what motivates us, we expect that understanding Conley's converse question can lead to stronger necessary conditions for stabilization through continuous feedback.        

Besides possibly stronger necessary conditions for continuous stabilization, the study of Conley's converse question directly relates to understanding topological properties of spaces of stable dynamical systems (as was motivating prior work~\cite{ref:JongeneelSchwan2024TAC}). This is of practical importance as several frameworks in optimal control and reinforcement learning aim to optimize over precisely such a space, \textit{e.g.}, moving from a pre-stabilized system to an optimally controlled system, we point to \cite{ref:Fazel_18,ref:furieri2024learning} and references therein. 

Before we continue, we recall that Conley's converse question is not trivial. 
\begin{example}[Trivial convex combinations can fail]
\begin{upshape}
    Consider a linear differential equation $\dot{x}=A(s) x$ on $\mathbb{R}^2$ parametrized by the matrices 
    \begin{equation*}
        [0,1]\ni s \mapsto A(s):=s \cdot \begin{pmatrix}
            -1 & 10\\ 0 & -1
        \end{pmatrix} + (1-s)\cdot\begin{pmatrix}
            -1 & 0\\ 10 & -1
        \end{pmatrix}.
    \end{equation*}
Both $A(0)$ and $A(1)$ correspond to global asymptotically stable systems, yet, for $s = \tfrac{1}{2}$, the system $\dot{x}=A(s) x$ is unstable. Hence, we cannot simply construct straight-line homotopies between stable vector fields and expect that stability is preserved. Instead, we know from~\cite{ref:JongeneelSchwan2024TAC} that for vector fields with convex Lyapunov functions we should homotope via the canonical ODE $\dot{x}=-x$. Explicitly, consider the following path of vector fields
    defined by 
\begin{equation*}
    H(x;s) := 
\begin{pmatrix}
    -1 & \max\{0,(1-2s)10\}\\
    \max\{0,(2s-1) 10\} & -1
\end{pmatrix}x, \quad  s\in [0,1].
\exampleEnd
\end{equation*}
\end{upshape}
\end{example}

\paragraph*{Notation} For $r\in \mathbb{N}\cup \{\infty\}$, $C^r(U;V)$ denotes the set of $C^r$-smooth functions from $U$ to $V$. The inner product on $\mathbb{R}^n$ is denoted by $\langle \cdot , \cdot \rangle$ and $\mathbb{S}^{n-1}:=\{x\in \mathbb{R}^n:\|x\|_2=1\}$. 
The identity map $p\mapsto p$ on a space $M$ is denoted by $\mathrm{id}_M$. 
Given a function $V:M\to \mathbb{R}$, then, $V^{-1}(c):=\{p\in M: V(p)=c\}$. We typically use $\varphi$ to denote a (semi)flow. If this flow comes from a vector field $X$, we write $\varphi(\cdot;X)$, similarly, if $\varphi$ is parametrized by $s\in [0,1]$ we write $\varphi(\cdot;s)$, that is, we overload the meaning of $\varphi$. A homotopy $[0,1]\times M\ni(s,p)\mapsto H(s,p)$ is said to be an \textit{{isotopy}} when $p\mapsto H(s,p)$ is a topological embedding for all $s\in[0,1]$. A homeomorphism $\psi:\mathbb{R}^n\to \mathbb{R}^n$ is said to be a \textit{{stable homeomorphism}} when it is a finite composition of homeomorphisms that equal the identity map on some---not necessarily the same---non-empty open subset of $\mathbb{R}^n$. We denote by $\mathrm{Homeo}(\mathbb{R}^n;\mathbb{R}^n)$ the group of homeomorphisms $\psi:\mathbb{R}^n\to \mathbb{R}^n$. Additionally, we denote by $\mathrm{Homeo}^+(\mathbb{R}^n;\mathbb{R}^n)\subset \mathrm{Homeo}(\mathbb{R}^n;\mathbb{R}^n)$ the subgroup of all \textit{orientation-preserving}\footnote{Regarding  \textit{orientation}, we point to~\cite{ref:guillemin2010differential,Lee2} for the smooth case. The topological definition relies on algebraic topology, see~\cite[Sec. 3.3]{Hatcher}.} homeomorphisms. A continuous function $\alpha:\mathbb{R}_{\geq 0}\to \mathbb{R}_{\geq 0}$ is of class $\mathcal{K}$ when $\alpha$ is strictly increasing and $\alpha(0)=0$. If, additionally $\lim_{s\to +\infty}\alpha(s)=+\infty$, then, $\alpha$ is of class $\mathcal{K}_{\infty}$. At last, a continuous function $\beta:\mathbb{R}_{\geq 0}\times \mathbb{R}_{\geq 0} \to \mathbb{R}_{\geq 0}$ is said to be of class $\mathcal{KL}$ when $s\mapsto \beta (s,t)$ is of class $\mathcal{K}$ and $t\mapsto \beta(s,t)$ is decreasing and such that $\lim_{t\to +\infty}\beta(s,t)=0$.   


\subsection{Dynamical systems}
We study \textit{semi-dynamical systems} comprised of the triple $\Sigma:=({M}^n,\mathbb{R}_{\geq 0},\varphi)$. Here,  ${M}^n$ will be a smooth $n$-dimensional manifold (frequently, diffeomorphic to $\mathbb{R}^n$, which we denote by ${M}^n\simeq_d \mathbb{R}^n$,) and $\varphi:\mathbb{R}_{\geq 0}\times {M}^n\to {M}^n$ is a (global) \textit{{semiflow}}, that is, a continuous map that satisfies for any $p\in {M}^n$: $(i)$ $\varphi(0,p)=p$; and $(ii)$ $(\varphi(s,\varphi(t,p))=\varphi(t+s,p)$ $\forall s,t \in \mathbb{R}_{\geq 0}:=\{t\in \mathbb{R}:t\geq 0\}$. We will usually write $\varphi^t(\cdot)$ instead of $\varphi(t,\cdot)$. In particular, we study (forward complete) semiflows generated by \textit{{vector fields}} over ${M}^n$, that is, when $\varphi$ satisfies
\begin{align}
\label{equ:dyn:sys:M}
\frac{\mathrm{d}}{\mathrm{d}\tau}\varphi^{\tau}(p)|_{\tau=t} = X(\varphi^t(p)),\quad \forall (t,p)\in \mathbb{R}_{\geq 0}\times M^n,  
\end{align}
where $X$ is a continuous section, that is, the map $X:M^n\to TM^n$ is continuous and satisfies $\pi\circ X = \mathrm{id}_{M^n}$ for $\pi$ the projection $(p,v)\mapsto \pi(p,v)=p$. When $M^n=\mathbb{R}^n$, the identification $T\mathbb{R}^n\simeq_d \mathbb{R}^n\times \mathbb{R}^n$ allows for discussing vector fields as self-maps of $\mathbb{R}^n$. On $\mathbb{R}^n$, we will frequently work with the ``\textit{canonical}'' ODE $\dot{x}=-x$, but also with the identity map $x\mapsto x$. To avoid confusion, we denote the canonical vector field by $-\partial_x$, and not $-\mathrm{id}_{\mathbb{R}^n}$.

The focus on \textit{semi}flows instead of flows allows us to look at sufficiently regular \textit{discontinuous} vector fields. This is relevant, as the introduction of feedback usually results in a closed-loop vector field that cannot be assumed to be continuous (\textit{e.g.}, think of optimal control\footnote{One might also think of topological obstructions, however, the type of discontinuities we consider do not allow for overcoming those obstructions, in general~\cite{ref:ryan1994brockett}.}). This choice of setting is also motivated by other recent work. For instance, the research question in~\cite{ref:ozaslan2024exponential} is: ``\textit{Given an exponentially stable optimization algorithm, can it be modified to obtain a finite/fixed-time stable algorithm?}''. {Some sufficient conditions are provided in \cite{ref:ozaslan2024exponential}}, we will show a generic, yet less constructive, viewpoint.  

Let $X$ be some vector field on $\mathbb{R}^n$, possibly discontinuous. To study $X$, we usually pass to some \textit{differential inclusion}
\begin{equation}
\label{equ:DI}
\dot{x}\in F(x), 
\end{equation}
where the set-valued map $F:\mathbb{R}^n\to 2^{\mathbb{R}^n}$, with $2^S$ denoting the \textit{power set} of a set $S$, is in some precise sense related to $X$. The rationale is to pass from an irregular single-valued map, to a more regular set-valued map. 

Let $\lambda^d$ denote the Lebesgue measure on $\mathbb{R}^d$, then, solutions $\mathcal{I}\ni t\mapsto \xi(t)$ to these differential inclusions, are \textit{absolutely continuous} (AC) on each compact subinterval of $\mathcal{I}$ and such that $\dot{\xi}(t)\in F(\xi(t))$ for $\lambda^1$-\textit{a.e.} $t\in \mathcal{I}$. Typically, $F$ is assumed to be upper semi-continuous and compact, convex valued. With those assumptions in mind, then, under mild conditions on $X$, a valuable solution framework follows by applying \textit{Filippov's operator} $\mathcal{F}$, that is,
\begin{equation*}
    x\mapsto \mathcal{F}[X](x)  := \bigcap_{\delta>0}\bigcap_{N\subset \mathbb{R}^n:\lambda^n(N)=0}\overline{\mathrm{co}}\,X\left( B^n(x,\delta)\setminus N\right),
\end{equation*}
for $\overline{\mathrm{co}}(\cdot)$ the closure of the convex hull and $B^n(x,\delta):=\{z\in \mathbb{R}^n:\|x-z\|_2<\delta\}$. Then, solutions to $\dot{x}\in \mathcal{F}[X](x)$ are understood to be ``\textit{generalized}'' solutions to $\dot{x}=X(x)$, called \textit{Filippov solutions}, and such that $\dot{\xi}(t)\in \mathcal{F}[X](\xi(t))$ for $\lambda^1$-a.e. $t\in \mathrm{dom}(\xi)$. For references, see \cite{ref:filippov1988differential}, \cite[Ch. 1]{ref:bacciotti2005liapunov} and~\cite{ref:cortes2008discontinuous}.

\subsection{Stability}
In this section we will characterize stability under~\eqref{equ:DI}. Starting with the regular case, for simplicity, let the vector field $F$ be single-valued and smooth. In that case, $F$ generates a flow, denoted $\varphi(\cdot;F)$. A point $x^{\star}\in \mathbb{R}^n$ is an \textit{equilibrium point} of $F$ when $F(x^{\star})=0$ or equivalently $\varphi^t(x^{\star};F)=x^{\star}$ $\forall t\in \mathbb{R}$. We will set $x^{\star}:=0$, unless stated otherwise. Then, $0$ is said to be \textit{{globally asymptotically stable}} (GAS) (under $F$) when 
\begin{enumerate}[(s.i)]
    \item \label{prop:i:s}$0$ is \textit{Lyapunov stable}, that is, for any open neighbourhood $U_{\varepsilon}\ni 0$ there is an open set $U_{\delta}\subseteq U_{\varepsilon}$ such that $\varphi^t(U_{\delta};F)\subseteq U_{\varepsilon}$ $\forall t\in \mathbb{R}_{\geq 0}$;
    \item $0$ is \textit{globally attractive}, that is, $\lim_{t\to+\infty}\varphi^t(x_0;F)=0$ for all $x_0\in \mathbb{R}^n$.
\end{enumerate}
{
\begin{remark}[On global stability]
\begin{upshape}
Although we merely work with equilibria and $\mathbb{R}^n$ (or $M^n\simeq_d \mathbb{R}^n$) we take a topological approach akin to~\cite{ref:bhatiahajek2006local} and hence GAS is defined as above. One can turn Property~(s.\ref{prop:i:s}) truly global, however, \textit{e.g.}, see~\cite{ref:wilson1969smoothing,ref:lin1996smoothV2}. Specifically, let $B_r(x)$ be some $r$-metric ball centered at $x$, then, require the existence of $\delta\in \mathcal{K}_{\infty}$ such that for any $\varepsilon\geq 0$ we have $\varphi^t(B_{\delta(\varepsilon)}(x))\subseteq B_{\varepsilon}(x)$ $\forall t\in \mathbb{R}_{\geq 0}$. In general these definitions are not equivalent, however, for $0$ being GAS on $\mathbb{R}^n$, they are~\cite{ref:andriano1997global}.
\remarkEnd
\end{upshape}
\end{remark}
}
Due to the work of Lyapunov~\cite{ref:liapunov1892general}, we know that to reason about stability, it is worthwhile to look for ``\textit{potential functions}'' that capture stability, illustrated by the fact that his theory effectively replaced the definitions of stability. Specifically, we look for $V\in C^{\infty}(\mathbb{R}^n;\mathbb{R}_{\geq 0})$ satisfying:
\begin{enumerate}[(V.i)]
    \item \label{prop:i:V}$V(x)>0$ for all $x\in \mathbb{R}^n\setminus\{0\}$ and $V(0)=0$;
        \item \label{prop:iii:V}$V$ is \textit{radially unbounded}, that is, $V(x)\to +\infty$ for $\|x\|\to+\infty$; and
    \item \label{prop:ii:V}$\langle \nabla V(x),F(x) \rangle <0$ for all $x\in \mathbb{R}^n\setminus\{0\}$.
\end{enumerate}
Property~(V.\ref{prop:iii:V}) implies sublevel set compactness. We call such a function a (smooth, strict and proper) Lyapunov function (with respect to the pair $(F,0)$). This work is about GAS, so we will omit ``\textit{strict}'' and ``\textit{proper}'' from now on. Then, based on converse theory, \textit{e.g.}, see~\cite{ref:Massera1956,ref:kurzweil1963inversion,ref:wilson1969smoothing,ref:fathi2019smoothing}, we can appeal to the celebrated theorem stating that $0$ is GAS if and only if there is a (corresponding) smooth Lyapunov function~\cite[Thm.~2.4]{ref:bacciotti2005liapunov}. 

A generalization of the above to differential inclusions~\eqref{equ:DI} is as follows. 
\begin{definition}[Strong Lyapunov pairs~{\cite[Def. 1.1]{ref:clarke1998asymptotic}}]
\label{def:stong:Lyapunov:pair}
A pair of continuous functions $(V,W)$, with $V\in C^{\infty}(\mathbb{R}^n;\mathbb{R}_{\geq 0})$ and $W\in C^{\infty}(\mathbb{R}^n\setminus \{0\};\mathbb{R}_{\geq 0})$, is said to be a $C^{\infty}$ strong Lyapunov
pair for $F$, as in~\eqref{equ:DI}, provided that $F$ is upper semi-continuous and compact, convex valued, plus, the following hold:
\begin{enumerate}[(i)]
    \item $V(x)>0$ and $W(x)>0$ on $\mathbb{R}^n\setminus\{0\}$, with $V(0)=0$;
    \item The sublevel sets of $V$ are compact; and
    \item $\max_{v\in F(x)}\langle \nabla V(x),v\rangle \leq -W(x)$ on $\mathbb{R}^n\setminus\{0\}$.
\end{enumerate}
\end{definition}

The preposition ``\textit{strong}'' comes from the fact that we look at all solutions satisfying~\eqref{equ:DI}. 
For further references, and generalizations of Lyapunov's stability theory, we point the reader to~\cite{ref:bhatia1970stability,ref:sontag2013mathematical,ref:bacciotti2005liapunov,ref:bhatiahajek2006local,ref:goebel2012hybrid}. 

Before closing this section, we explicitly illustrate why working with $C^{\infty}$ or even $C^0$ vector fields is arguably overly restrictive when interested in qualitative stability questions. We provide two examples that will return. 
\begin{example}[A semiflow corresponding to a vector field with bounded discontinuities]
\label{ex:X1:semiflow}
\begin{upshape}
    Consider the following \textit{discontinuous} vector field on $\mathbb{R}$:
    \begin{equation}
    \label{equ:discont:F}
        \dot{x} = X_1(x) := -\mathrm{sgn}(x),
    \end{equation}
    with $\mathrm{sgn}(0):=0$. Now, consider the map $\varphi_1:\mathbb{R}_{\geq 0}\times \mathbb{R}\to \mathbb{R}$ defined through
        \begin{equation}
        \label{equ:semiflow:sign}
        (t,x) \mapsto \varphi_1^{t}(x):= \begin{cases}
            \min\{0,x+t\} \quad & x<0\\
            0 \quad & x=0\\
            \max\{0,x-t\} \quad & x>0
        \end{cases}.
    \end{equation}
    One can check that $\varphi_1$ is a semiflow, describing a solution (\textit{e.g.}, in the sense of Filippov) to~\eqref{equ:discont:F}. 
Regarding stability, consider the $C^{\infty}$ Lyapunov function $x\mapsto V_1(x):=\tfrac{1}{2}x^2$ and see that $\nabla V_1(x)X_1(x) = -|x|<0$ on $\mathbb{R}\setminus\{0\}$. This already shows that the existence of a smooth Lyapunov function, asserting that the origin is GAS, does not imply the existence of a \textit{flow}, nor does it imply that convergence to $0$ is merely \textit{asymptotic}. Now let $X_2(x):=-\partial_x$, define $[0,1]\ni s \mapsto X(\cdot;s):=(1-s)X_1+sX_2$ and see that for and $s\in [0,1]$ we have $\nabla V(x)X(x;s) <0$ on $\mathbb{R}\setminus \{0\}$. 
The semiflow corresponding to $X(\cdot;s)$ becomes
    \begin{equation*}
        (t,x) \mapsto \varphi^{t}(x;s):= \begin{cases}
            \min\{0,e^{-s t}x+(1-e^{-s t})(1-s)/s\} \quad & x<0\\
            0 \quad & x=0\\
            \max\{0,e^{-s t}x+(e^{-s t}-1)(1-s)/s\} \quad & x>0
        \end{cases}.
    \end{equation*}
Then, $s\mapsto \varphi(\cdot;s)$ parametrizes a homotopy, along semiflows such that $0$ remains GAS. For $\lim_{s\to 0^+}\varphi(\cdot;s)$, use $e^{-s t}=1-s t+\sum^{\infty}_{n=2}((-s t)^n/n!)$. 
    \exampleEnd
    \end{upshape}
\end{example}

\begin{example}[An irregular gradient flow]
\label{ex:irr:nabla:V}
\begin{upshape}
Let $\gamma\in \mathcal{K}_{\infty}$ be smooth on $(0,+\infty)$ and such that $\gamma(s)/\gamma'(s)\geq s$ (\textit{e.g.}, $s\mapsto \gamma (s)=s^{1/2}$). Now consider the function $x\mapsto V_{\gamma}(x):=\gamma(\|x\|_2)$ and construct the vector field 
    \begin{equation}
    \label{equ:discont:F4}
        \dot{x} = X_3(x) := \begin{cases}
            -\nabla V_{\gamma}(x) \quad & x\in \mathbb{R}^n\setminus \{0\}\\
            0 \quad & \text{otherwise}
        \end{cases}.
    \end{equation}
It seems we cannot appeal to Filippov's framework as {$X_3$ is not locally essentially bounded}. 
However, as $\nabla V_{\gamma}(x) = \gamma'(\|x\|_2)(x/\|x\|_2)$ we can study~\eqref{equ:discont:F4} directly. Decompose $x\in \mathbb{R}^n\setminus\{0\}$ as $x=\|x\|_2\cdot (x/\|x\|_2)=:r\cdot u$, then, $\dot{r}=-\gamma'(r)$ while $\dot{u}=0$. 
In general $\gamma'(s)>0$ $\forall s> 0$, need not be true, but suppose this is true for our choice of $\gamma$, \textit{e.g.}, pick $s\mapsto\gamma(s)=s^{1/2}$.   
Now, suppose that $s\mapsto 1/\gamma'(s)$ is of class $\mathcal{K}$ on $\mathbb{R}_{\geq 0}$, we can define $r\mapsto\Gamma(r):=\int^r_0 1/\gamma'(\varrho) \mathrm{d}\varrho$, which is now of class $\mathcal{K}_{\infty}$ and hence invertible on $\mathbb{R}_{\geq 0}$. Under the aforementioned assumptions, the semiflow corresponding to $X_3$ becomes
    \begin{equation*}
        (t,x) \mapsto \varphi_3^{t}(x):= \begin{cases}
            \min\{0,\Gamma^{-1}(\Gamma(\|x\|_2)-t)\}(x/\|x\|_2) \quad & \Gamma^{-1}(t)\leq \|x\|_2\\
            0 \quad & \mathrm{otherwise}
        \end{cases}.
        \exampleEnd 
    \end{equation*} 
\end{upshape}
\end{example}

\section{Further comments on related work}
\label{sec:further}
Recently, we showed that when $0\in \mathbb{R}^n$ is GAS under a continuous vector field $X$, and if this can be asserted using a $C^1$ \textit{convex} Lyapunov function, then, $X$ is straight-line homotopic to $-\partial_x$, such that the origin remains GAS along the homotopy~\cite{ref:JongeneelSchwan2024TAC}. This is a convenient result, but not a general one. 

Earlier, the following homotopy of vector fields, appeared in several works (\textit{e.g.}, see \cite[p. 1603]{ref:ryan1994brockett}, \cite[Thm. 21]{ref:sontag2013mathematical}, \cite[p. 291]{ref:coron2007control} and \cite[Ex. 3.4]{ref:jongeneel2023topological}):  
\begin{equation}
\label{equ:flow:homotopy:sontag:coron}
    (s,x) \mapsto H(s,x) := \begin{cases}
        X(x) \quad &\text{if }s=0\\
        -x \quad &\text{if }s=1\\
        \frac{1}{s}(\varphi^{s/(1-s)}(x;X)-x)\quad &\text{if }s\in(0,1)
    \end{cases}.
\end{equation}

Unfortunately, for~\eqref{equ:flow:homotopy:sontag:coron}, $0\in \mathbb{R}^n$ is not known to be GAS along the path $[0,1]\ni s\mapsto H(s,x)$.
The scalar and linear cases can be understood, however. 

\begin{example}[Stability-preserving homotopies for $n=1$]
\begin{upshape}
As for any $t\in \mathbb{R}_{> 0}$, the only fixed point of $\varphi^{t}(\cdot;X)$ is $0$, it follows that for $n=1$, the homotopy~\eqref{equ:flow:homotopy:sontag:coron} preserves stability since the sign of $x\mapsto H(s,x)$ cannot flip, for otherwise, $\varphi^{s/(1-s)}(x;X)=x$ must hold for some $s\in (0,1)$ and $x\neq 0$.  
\exampleEnd
\end{upshape}
\end{example}

\begin{example}[Stability preserving homotopies for linear ODEs]
\begin{upshape}
    Let $0\in \mathbb{R}^n$ be GAS under $\dot{x}=X(x):=Ax$, for $A=:TJT^{-1}$ the Jordan form decomposition of $A$. Then it follows that $\frac{1}{s}(\varphi^{s/(1-s)}(x;X)-x) = \frac{1}{s}T( e^{s/(1-s) J}-I_n) T^{-1}x$ such that stability is preserved throughout the homotopy~\eqref{equ:flow:homotopy:sontag:coron}. 
    \exampleEnd
    \end{upshape}
\end{example}

Either way, the homotopy~\eqref{equ:flow:homotopy:sontag:coron} does already show that there is \textit{a} homotopy that does not introduce new equilibrium points, along the homotopy. Indeed, this has been generalized recently by Kvalheim to compact attractors on manifolds~\cite{ref:kvalheim2022obstructions}. On the other hand, it is known that there is no reason why stability \textit{must} be preserved along such a homotopy. In the spirit of~\cite[Sec. 4.1]{ref:eliashberg2002introduction}, consider the following family of linear vector fields on $\mathbb{R}^2$: 
\begin{equation*}
    \dot{x} = X(x;s):=R(s) x,\quad [0,1]\ni s \mapsto R(s):=\begin{pmatrix}
        \cos(s\pi) & -\sin(s\pi)\\
        \sin(s\pi) & \cos(s\pi)
    \end{pmatrix},
\end{equation*}
parametrizing a homotopy from $X(\cdot;0)=\partial_x$ to $X(\cdot;1)=-\partial_x$ along non-vanishing vector fields on $\mathbb{R}^n\setminus \{0\}$. Note, the (Hopf) indices~\cite[p. 32]{ref:milnor65} of these ``\textit{qualitatively opposite}'' vector fields are equal, \textit{i.e.}, $\mathrm{ind}_0(\partial_x)=1^n=(-1)^n=\mathrm{ind}_0(-\partial_x)$. 
It is this weakness of existing homotopy-invariants that we hope to overcome by studying more restrictive equivalence classes. 

\section{Stability preserving homotopies}
\label{sec:main}
{Now, we will address our central research question in case $A$ is an equilibrium point. First, we consider $0\in \mathbb{R}^n$ being GAS under some appropriately regular vector field on $\mathbb{R}^n$ and eventually generalize to smooth manifolds.}
{
\begin{assumption}[Vector field regularity on $\mathbb{R}^n$]
\label{ass:reg:X}
Our vector fields are locally essentially bounded, possibly set-valued at $0$ and locally Lipschitz on $\mathbb{R}^n\setminus\{0\}$. 
\end{assumption}}
If $X$ satisfies Assumption~\ref{ass:reg:X}, $\mathcal{F}[X]$ is upper semi-continuous and compact, convex valued, which allows for a \textit{smooth} converse Lyapunov theory, \textit{e.g.}, see \cite{ref:clarke1998asymptotic,ref:goebel2012hybrid}. {We focus on Filippov's framework, but in what follows, one may replace  $\mathcal{F}[X]$ with any vector field $F$ that complies with Definition~\ref{def:stong:Lyapunov:pair}.} 
  
\begin{proposition}[Strong global asymptotic stability and homotopic semiflows on $\mathbb{R}^n$]
\label{prop:GAS:homotopy:semiflow}
    Let $0\in \mathbb{R}^n$, for $n \neq 5$, be strongly GAS, in the sense of Filippov, under $\dot{x}\in \mathcal{F}[X](x)$, with $X$ satisfying Assumption~\ref{ass:reg:X}. Then, 
    \begin{enumerate}[(i)]
        \item for any $t\geq 0$ we have that the time-$t$ map of the semiflow generated by $\mathcal{F}[X]$, that is, $\varphi^{t}(\cdot;\mathcal{F}[X])$, is homotopic to $\varphi^{t}(\cdot;-\partial_x)$, along time-$t$ maps corresponding to semiflows such that $0$ is strongly GAS; and
        \item in particular, the semiflow $\varphi(\cdot;\mathcal{F}[X])$ is homotopic to $\varphi(\cdot;-\partial_x)$, along semiflows that preserve $0$ being strongly GAS.
    \end{enumerate}
\end{proposition}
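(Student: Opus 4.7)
The plan is to leverage the Gr\"une--Sontag--Wirth (GSW) theorem to obtain a topological conjugacy between the Filippov semiflow of $\mathcal{F}[X]$ and the canonical exponentially stable semiflow $\varphi(\cdot;-\partial_x)$, and then ``switch this conjugacy on continuously'' via an isotopy of orientation-preserving homeomorphisms. Under Assumption~\ref{ass:reg:X}, the smooth converse Lyapunov machinery of Clarke--Ledyaev--Stern combined with the GSW construction yields a homeomorphism $\psi:\mathbb{R}^n\to\mathbb{R}^n$ fixing the origin and satisfying $\psi\circ \varphi^t(\cdot;\mathcal{F}[X])=e^{-t}\psi(\cdot)$ for all $t\geq 0$, so that $\varphi^t(x;\mathcal{F}[X])=\psi^{-1}(e^{-t}\psi(x))$.

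The first refinement is to arrange that $\psi$ is orientation-preserving, as flagged in the introduction. Because $x\mapsto e^{-t}x$ commutes with every linear isomorphism, precomposing $\psi$ with a linear reflection $R\in O(n)\setminus SO(n)$ preserves the conjugacy: $(R\psi)^{-1}\circ (e^{-t}I)\circ(R\psi)=\psi^{-1}\circ(e^{-t}I)\circ\psi$. Hence, replacing $\psi$ by $R\psi$ if necessary, I may assume $\psi\in\mathrm{Homeo}^+(\mathbb{R}^n;\mathbb{R}^n)$.

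Next I would invoke the classical topological fact that, for $n\neq 5$, the group $\mathrm{Homeo}^+(\mathbb{R}^n;\mathbb{R}^n)$ is path connected---a consequence of the resolution of the stable homeomorphism conjecture in those dimensions, with ``stable homeomorphism'' as recalled in the paper's notation. This produces a continuous path $[0,1]\ni s\mapsto \tilde\psi_s$ from $\mathrm{id}_{\mathbb{R}^n}$ to $\psi$. I would then reparametrise to fix the origin: setting $\psi_s(x):=\tilde\psi_s(x)-\tilde\psi_s(0)$ yields a continuous isotopy from $\mathrm{id}_{\mathbb{R}^n}$ to $\psi$ (using $\psi(0)=0$) inside $\mathrm{Homeo}^+(\mathbb{R}^n;\mathbb{R}^n)$ with the additional property that $\psi_s(0)=0$ for every $s\in[0,1]$.

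Finally, define the homotopy of semiflows by conjugation,
\begin{equation*}
(t,x,s)\mapsto \varphi^t(x;s):=\psi_s^{-1}\bigl(e^{-t}\psi_s(x)\bigr).
\end{equation*}
For each $s\in[0,1]$ this is jointly continuous in $(t,x)$ and inherits the semiflow identities from $\varphi(\cdot;-\partial_x)$ by topological conjugation, with the equilibrium $\psi_s^{-1}(0)=0$ remaining strongly GAS since strong GAS is a topological invariant. At $s=0$ we recover $\varphi(\cdot;-\partial_x)$ and at $s=1$ we recover $\varphi(\cdot;\mathcal{F}[X])$, while joint continuity in $s$ follows from continuity of the isotopy. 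This directly yields (ii); restricting to any fixed $t\geq 0$ gives (i). The step I expect to be most delicate is the dimension-constrained topological claim---justifying path connectedness of $\mathrm{Homeo}^+(\mathbb{R}^n;\mathbb{R}^n)$ and arranging the basepoint-preservation---which ultimately rests on non-trivial work of Kirby, Siebenmann and Quinn and is precisely the reason for the exception at $n=5$.
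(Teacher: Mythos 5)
Your proof hinges on the claim that the Gr\"une--Sontag--Wirth construction yields a homeomorphism $\psi$ with $\psi\circ\varphi^t(\cdot;\mathcal{F}[X])=e^{-t}\psi(\cdot)$, i.e.\ a global topological \emph{conjugacy} to the linear flow. This is not what \cite[Prop.~1]{ref:grune1999asymptotic} provides: that result normalizes the \emph{Lyapunov function} ($V\circ T^{-1}=\gamma(\|\cdot\|_2)$), so the transformed system merely satisfies exponential decay estimates; it is not equal to, nor conjugate by that same map to, the flow of $-\partial_x$ (already $\dot x=-2x$, or any exponentially stable spiral, shows the gap between ``exponentially stable'' and ``time-$t$ map equals $e^{-t}\,\mathrm{id}$''). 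Worse, for the class of systems the proposition actually covers the claimed conjugacy is \emph{false}: Assumption~\ref{ass:reg:X} admits vector fields such as $X=-\mathrm{sgn}$ (Example~\ref{ex:X1:semiflow}) whose Filippov semiflow reaches $0$ in finite time, so $\varphi^t(\cdot;\mathcal{F}[X])$ is not injective and cannot be conjugate to the homeomorphism $x\mapsto e^{-t}x$. A genuine global Hartman--Grobman statement is much stronger than GSW and is exactly what the paper defers to later work in its ``Added in proof''. Once the conjugacy is removed, the rest of your argument (reflection to fix orientation, isotopy to the identity, conjugated semiflows remaining GAS) has nothing to conjugate.

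The paper's proof is structured precisely to avoid this: it never conjugates the original semiflow to $-\partial_x$. It first straight-line homotopes $\mathcal{F}[X]$ to the gradient field $-\nabla V$ of a Clarke--Ledyaev--Stern Lyapunov pair (stability certified by $V$ throughout, including for the discontinuous/finite-time cases); only the genuine gradient \emph{flow} is then conjugated along the isotopy from $\mathrm{id}_{\mathbb{R}^n}$ to $T^{-1}$; and the result is straight-line homotoped to $-\partial_x$ using $\tfrac12\|x\|_2^2$ as a common Lyapunov function. Note also that the restriction $n\neq 5$ does not come from path-connectedness of $\mathrm{Homeo}^+(\mathbb{R}^n;\mathbb{R}^n)$ (the stable homeomorphism theorem covers the relevant cases), but from needing $T$ to be a diffeomorphism on $\mathbb{R}^n\setminus\{0\}$ so that the conjugated vector field $\widetilde F$ in Step~(iv) is well defined---a smooth, not topological, obstruction tied to the smooth Poincar\'e conjecture for $\mathbb{S}^4$. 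Your argument, as written, would need no dimension restriction at all, which is a further sign that the conjugacy input is doing unjustified work.
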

\begin{proof}
    The proof proceeds in 5 steps. First we show that the time-$t$ map $\varphi^{t}(\cdot;\mathcal{F}[X])$ can be homotoped along semiflows to a gradient flow $\varphi^{t}(\cdot;-\nabla V)$, such that along the homotopy $0$ remains strongly GAS. By exploiting symmetry we show in Step (ii) that an extension of~\cite{ref:grune1999asymptotic} allows for showing that for any $\gamma\in \mathcal{K}_{\infty}$ there is a $T\in \mathrm{Homeo}^+(\mathbb{R}^n;\mathbb{R}^n)$ (not just $T\in \mathrm{Homeo}(\mathbb{R}^n;\mathbb{R}^n)$)  such that $V(T^{-1}(x))=\gamma(\|x\|_2)$. Then, it follows that $T$ is homotopic to $\mathrm{id}_{\mathbb{R}^n}$ along a path in $\mathrm{Homeo}^+(\mathbb{R}^n;\mathbb{R}^n)$. In Step (iii), we use this map $T$ to homotope $\varphi^{t}(\cdot;-\nabla V)$ to a semiflow, denoted $\widetilde{\varphi}$, that has $V\circ T^{-1}$ as its Lyapunov function. Again, $0$ remains strongly GAS along the path. Next, we show in Step (iv) that $\widetilde{\varphi}^{t}$ can be homotoped to $\varphi^{t}(\cdot;-\partial_x)$. We conclude in Step (v).

    (i) Since $0$ is strongly GAS under $\dot{x}\in \mathcal{F}[X](x)$ and $X$ satisfies Assumption~\ref{ass:reg:X}, there is a $C^{\infty}$ Lyapunov pair $(V,W)$, that certifies stability of $0\in \mathbb{R}^n$~\cite[Thm. 1.3]{ref:clarke1998asymptotic}, under any Filippov solution. Then, construct the homotopy $H:[0,1]\times \mathbb{R}^n\setminus\{0\}\to \mathbb{R}^n\setminus\{0\}$ through non-vanishing vector fields, defined by $(s,x)\mapsto H(s,x):=(1-s)\mathcal{F}[X](x)-s\nabla V(x)$. As for any $s\in [0,1]$, $(1-s)\mathcal{F}[X]-s\nabla V$ satisfies Assumption~\ref{ass:reg:X}, plus $\langle \nabla V(x), H(s,x)\rangle \leq -(1-s)W(x)-s\| \nabla V(x)\|_2^2$ $\forall\, x\in \mathbb{R}^n\setminus\{0\}$,
    we find that $0$ is strongly GAS, in the sense of Filippov, under $\dot{x}\in (1-s)\mathcal{F}[X](x)-s\nabla V(x)$, for any fixed $s\in [0,1]$. To construct a homotopy on the level of semiflows, we follow Hartman~\cite[p. 93]{hartman2002ordinary} and consider the following extended system: 
    \begin{equation*}
        \Sigma_H :\left\{
\begin{aligned}
   \dot{s}&=0,\\
    \dot{x}&\in (1-s)\mathcal{F}[X](x)-s\nabla V(x). 
\end{aligned}
        \right.
    \end{equation*}     
As $V$ has compact sublevel sets (see Definition~\ref{def:stong:Lyapunov:pair}), any Filippov solution to $\Sigma_H$ is defined for all $t\geq 0$, \textit{e.g.}, see~\cite[Thm. 3.3]{ref:khalil2002nonlinear}.
    Now since $(s,x)\mapsto H(s,x)$ is locally Lipschitz away from $0$, we can appeal to the \textit{Picard-Lindel\"of theorem}~\cite[Thm. 2.2]{ref:teschl2012ordinary} and hence, by strong asymptotic stability~\cite[Def. 2.1]{ref:clarke1998asymptotic} \textit{cf}.~Example~\ref{ex:X1:semiflow}, any solution to $\Sigma_H$ is also unique, which allows us to define the time-$t$ map $(s,x)\mapsto\varphi^{t}((s,x);\Sigma_H)$ for any $t\geq 0$. In its turn, by considering the path $[0,1]\ni s\mapsto \varphi^{t}((s,\cdot);\Sigma_H)$, we see that this time-$t$ map $\varphi^{t}$ defines a homotopy from the semiflow under $\mathcal{F}[X]$ to the (semi)flow under $-\nabla V$. Specifically, we have that $\varphi^{t}((0,x);\Sigma_H)=(0,\varphi^{t}(x;\mathcal{F}[X]))$, $\varphi^{t}((1,x);\Sigma_H)=(1,\varphi^{t}(x;-\nabla V))$ and $\varphi^{t}((s,0);\Sigma_H)=(s,0)$ $\forall s\in [0,1]$. Now, for any fixed $s\in [0,1]$, overload notation and define the time-$t$ map $\varphi^{t}(\cdot;(s,\Sigma_H)):=\pi_{2:n+1}\circ \varphi^{t}((s,\cdot);\Sigma_H)$, for $\pi_{2:n+1}$ the projection on the last $n$ coordinates. It follows that this map checks out as a semiflow, since $\pi_{2:n+1}$ is continuous and for any $x\in \mathbb{R}^n$ we have that
    \begin{equation*}
        \begin{aligned}
            \text{$(i)$ }\,&\varphi^0(x;(s,\Sigma_H))=x;\text{and}\\
            \text{$(ii)$}\,&\varphi^{t_2}(\varphi^{t_1}(x;(s,\Sigma_H));(s,\Sigma)) = \varphi^{t_2+t_1}(x;(s,\Sigma_H)),\quad \forall t_1,t_2\in \mathbb{R}_{\geq 0}.
        \end{aligned}
    \end{equation*}

(ii) Gr\"une, Wirth and Sontag showed that for our Lyapunov function $V$, we can find a $T\in \mathrm{Homeo}(\mathbb{R}^n;\mathbb{R}^n)$, with $T(0)=0$, such that $V(T^{-1}(x))=\gamma(\|x\|_2)$, for some $\gamma\in \mathcal{K}_{\infty}$, smooth on $(0,+\infty)$~\cite[Prop. 1]{ref:grune1999asymptotic}. We recall their construction $T$. Define $(t,x)\mapsto\psi(t,x)$ to be the flow with respect to
\begin{equation*}
    \dot{x} = \frac{\nabla V(x)}{\|\nabla V(x)\|_2^2}.
\end{equation*}
It follows that, on the domain of $\psi$, $V(\psi(t,x))=V(x)+t$. Now fix some $c>0$, then define the map $\pi_c:\mathbb{R}^n\setminus \{0\}\to V^{-1}(c)$ by $x\mapsto \pi_c(x):=\psi(c-V(x),x)$, that is, starting from $x\in \mathbb{R}^n\setminus \{0\}$ you flow---either backward or forward---along $\psi$ until you hit $V^{-1}(c)$. Then, due to initial work by Wilson~\cite{ref:wilson1967structure}, we know that $V^{-1}(c)\simeq_h \mathbb{S}^{n-1}$, however, now---that is, after~\cite{ref:grune1999asymptotic} was published, the resolution of the Poincar\'e conjecture\footnote{Specifically, Perelman provided the final step $(\mathbb{S}^3)$ in proving the \textit{generalized Poincar\'e conjecture} in $\mathsf{Top}$. For some historical comments, see~\cite{ref:stillwell2012poincare}.} implies there must be a homeomorphism $S:V^{-1}(c)\to \mathbb{S}^{n-1}$, for any $n\geq 1$. Next, define the map $Q:=S\circ \pi_c:\mathbb{R}^n\setminus\{0\}\to \mathbb{S}^{n-1}$ and eventually the map $T:\mathbb{R}^n\to \mathbb{R}^n$ by
\begin{equation*}
    x\mapsto T(x) :=\begin{cases}
        \gamma^{-1}(V(x))Q(x) \quad & x\in \mathbb{R}^n\setminus\{0\}\\
        0 \quad &\text{otherwise},
    \end{cases}
\end{equation*}
Now, it turns out that the particular choice of the homeomorphism $S:V^{-1}(c)\to \mathbb{S}^{n-1}$ is irrelevant for the construction of $T$ as in~\cite[Prop. 1]{ref:grune1999asymptotic}, as such, we select $S$ to enforce $T\in \mathrm{Homeo}^+(\mathbb{R}^n;\mathbb{R}^n)$. If $T\in \mathrm{Homeo}^+(\mathbb{R}^n;\mathbb{R}^n)$, we are done, if not, we can compose $S$ with a map reflecting a single coordinate, denoted by $\rho:\mathbb{S}^{n-1}\to \mathbb{S}^{n-1}$, \textit{e.g.}, $(x_1,x_2,\dots,x_n)\mapsto (x_1,x_2,\dots,-x_n)$, to change the orientation of $T$ (recall that $\mathrm{deg}(\rho)=-1$~\cite[Ch. 3]{ref:guillemin2010differential}), that is, we use $\rho\circ S$ instead of $S$. Thus, as $T$ can always be chosen to be orientation-preserving, $T$ (and equivalently $T^{-1}$) can assumed to be a \textit{stable homeomorphism}\footnote{The \textit{stable homeomorphism theorem}---stating that all orientation-preserving homeomorphisms of $\mathbb{R}^n$ are stable---connects to the \textit{annulus theorem} (which was a longstanding conjecture) via the work of Brown and Gluck~\cite{ref:BrownGluck} and has a rich history, with the key steps in the proof being provided by Kirby~\cite{ref:kirby1969stable} and Quinn~\cite{ref:quinn1982ends}. We point the reader to the survey of Edwards in the edited book by Gordon and Kirby on $4$-manifolds~\cite{ref:edwards1984solution}.} and hence $T$ is isotopic\footnote{Indeed, as also mentioned in~\cite{ref:grune1999asymptotic}, $T$ cannot (always) be a diffeomorphism on $\mathbb{R}^n$, for otherwise we constrain ourselves to smoothly conjugate systems.} to $\mathrm{id}_{\mathbb{R}^n}$, \textit{e.g.}, see~\cite{ref:kirby1969stable} and~\cite[Ch. 11]{ref:moise2013geometric}.  

(iii) First, we can always pick the homotopy $\widetilde{H}:[0,1]\times \mathbb{R}^n\to \mathbb{R}^n$ from $\mathrm{id}_{\mathbb{R}^n}$ to $T^{-1}$ such that $0$ is mapped to $0$ along the path in $\mathrm{Homeo}^+(\mathbb{R}^n;\mathbb{R}^n)$, \textit{e.g.}, for a homeomorphism $K:\mathbb{R}^n\to \mathbb{R}^n$ such that $K(0)\neq 0$, consider the map $x\mapsto L(x):=K(x)-K(0)$, with $y\mapsto L^{-1}(y)=K^{-1}(y+K(0))$. Now recall that $0\in \mathbb{R}^n$ is GAS under $\varphi^{t}(\cdot;-\nabla V)$, \textit{i.e.}, $V(\varphi^{t}(x;-\nabla V))-V(x)\leq -\int^{t}_0 W_V (\varphi^{\tau}(x;-\nabla V))\mathrm{d}\tau$ $\forall (t,x)\in \mathbb{R}_{>0}\times \mathbb{R}^n\setminus\{0\}$, with $x\mapsto W_V(x):=\|\nabla V(x)\|_2^2$. Since $\widetilde{H}(s,\cdot)$ is a homeomorphism with $\widetilde{H}(s,0)=0$, we also have 
\begin{equation*}
\label{equ:V1}
\begin{aligned}
    V(\varphi^{t}(\widetilde{H}(s,x);-\nabla V))-V(\widetilde{H}(s,x))\leq  
    -\int^{t}_0 W_V (\varphi^{\tau}(\widetilde{H}(s,x);-\nabla V))\mathrm{d}\tau, 
\end{aligned}
\end{equation*}
for any $(t,x)\in \mathbb{R}_{>0}\times \mathbb{R}^n\setminus\{0\}$. Next, define the semiflow $\widetilde{\varphi}(\cdot;s)$ through  
\begin{equation}
\label{equ:conj:semiflow}
    \widetilde{H}(s,\cdot)\circ \widetilde{\varphi}^{t}(\cdot;s) = \varphi^{t}(\cdot;-\nabla V)\circ \widetilde{H}(s,\cdot).  
\end{equation}
Note that $\widetilde{\varphi}^{t}(\cdot;0)=\varphi^{t}(\cdot;-\nabla V)$ whereas $\widetilde{\varphi}^{t}(\cdot;1)=T\circ \varphi^{t}(\cdot;-\nabla V)\circ T^{-1}$. Also note that continuity of $[0,1]\ni s\mapsto \widetilde{\varphi}^{t}(\cdot;s)$, in particular, continuity of $[0,1]\ni s\mapsto\widetilde{H}(s,\cdot)^{-1}$, follows from $\mathrm{Homeo}^+(\mathbb{R}^n;\mathbb{R}^n)$ being a \textit{topological group}~\cite{ref:arens1946topologies} (endowed with the compact-open topology), \textit{i.e.}, combine that the map $\mathrm{Homeo}^+(\mathbb{R}^n;\mathbb{R}^n)\ni h \mapsto h^{-1}$ is continuous with path-connectedness being preserved under a continuous map. Then, by~\eqref{equ:conj:semiflow}, we get
\begin{equation}
\label{equ:V2}
\begin{aligned}
 V(\widetilde{H}(s,\widetilde{\varphi}^{t}(x;s)))-V(\widetilde{H}(s,\widetilde{\varphi}^0(x;s)))\leq 
 -\int^{t}_0 W_V (\widetilde{H}(s,\widetilde{\varphi}^{\tau}(x;s)))\mathrm{d}\tau.
\end{aligned}
\end{equation}
Now, using~\eqref{equ:V2}, define $\widetilde{V}(\cdot;s)\in C^0(\mathbb{R}^n;\mathbb{R}_{\geq 0})$ through $x\mapsto\widetilde{V}(x;s):=V(\widetilde{H}(s,x))$ and similarly, define $x\mapsto\widetilde{W}_V(x;s):=W_V(\widetilde{H}(s,x))$ 
Hence, for any $s\in [0,1]$, we have $\widetilde{V}(\widetilde{\varphi}^{t}(x;s);s)-\widetilde{V}(\widetilde{\varphi}^0(x;s);s)\leq -\int^{t}_0 \widetilde{W}_V(\widetilde{\varphi}^{\tau}(x;s);s)\mathrm{d}\tau<0$, $\forall (t,x)\in \mathbb{R}_{>0}\times \mathbb{R}^n\setminus\{0\}$. Note that $x\mapsto\widetilde{V}(x;0)=V(x)$, $x\mapsto\widetilde{V}(x;1)=V(T^{-1}(x))=\gamma(\|x\|_2)=:V_\gamma(x)$ and $x\mapsto \widetilde{W}_V(x;1)=W_V(T^{-1}(x))$. Also, as $\widetilde{H}(s,\cdot)$ fixes $0$, $\widetilde{V}(\cdot;s)$ will always have the required compact sublevel sets.   

(iv) Although the path from $\mathrm{id}_{\mathbb{R}^n}$ to $T^{-1}$ is merely through $\mathrm{Homeo}^+(\mathbb{R}^n;\mathbb{R}^n)$, it is known that $T$ can be chosen to be diffeomorphic on $\mathbb{R}^n\setminus\{0\}$ for $n\neq 5$~\cite{ref:grune1999asymptotic} (the composition with $\rho$ does not change this)\footnote{Some comments are in place. Perelman's work came after \cite{ref:grune1999asymptotic}, resolving the case $n=4$ ($\mathbb{S}^3$). Despite some claims in the literature, to the best of our knowledge, the case $n=5$ ($\mathbb{S}^4$) is still open. As the generalized Poincar\'e conjecture is known to fail, in general, for $\mathsf{Diff}$ (e.g., due to the existence of \textit{exotic spheres}~\cite{ref:Milnor7}), we emphasize that the existence of these diffeomorphisms is due to Smale's \textit{h-cobordism theorem} (\textit{e.g.}, see \cite[Thm. 5.1]{ref:smale1962structure}). To be more explicit, we can construct a h-cobordism between $V^{-1}(c)$ and a levelset of $x\mapsto \|x\|_2^2$, \textit{e.g.}, by appealing to the same type of arguments as in \cite[Lem. III.4]{ref:jongeneelECC24}.}.
Hence, the vector field corresponding to $\widetilde{\varphi}(\cdot;1)$, denoted $\widetilde{F}$, is well-defined on $\mathbb{R}^n\setminus \{0\}$, \textit{i.e.}, 
\begin{equation}
\label{equ:DT:vecfield}
    \left.\frac{\mathrm{d}}{\mathrm{d}\tau}\widetilde{\varphi}^{\tau}(x;1)\right|_{\tau=0}=-\mathrm{D}T(T^{-1}(x))\nabla V(T^{-1}(x))=:\widetilde{F}(x),\,\, \forall x\in \mathbb{R}^n\setminus\{0\}.
\end{equation}
In general, $\widetilde{F}$ need not be continuous at $0$, this, because $\mathrm{D}T$ need not be continuous at $0$. However, $\gamma$ can always be chosen such that $T$ is $C^1$ on $\mathbb{R}^n$ with $\mathrm{D}T(0)=0$~\cite[Prop. 1]{ref:grune1999asymptotic}. Unfortunately, $V_{\gamma}$ is also by no means smooth at $0$ for any choice of $\gamma$. In fact, the appropriate $\gamma\in \mathcal{K}_{\infty}$ to guarantee that $T$ is sufficiently regular is of the following form. Let $\alpha\in \mathcal{K}$ be smooth, define $h(r):=\int^r_0 a(\tau)\mathrm{d}\tau$ and set $\gamma:=h^{-1}$. It follows that $\gamma'(r)=1/\alpha(\gamma(r))$ is smooth on $(0,+\infty)$, yet, $\lim_{r\to 0^+}\gamma'(r)=+\infty$, \textit{e.g.}, $r\mapsto \gamma(r)=r^{1/2}$. However, see that on $\mathbb{R}^n\setminus\{0\}$ we have $\langle \nabla V_{\gamma}(x),\widetilde{F}(x)\rangle \leq -\widetilde{W}_V(x)$, that is,
\begin{equation}
\label{equ:Vgamma:Lyap}
    \left\langle \gamma'(\|x\|_2)\frac{x}{\|x\|_2},\widetilde{F}(x)\right\rangle \leq - \widetilde{W}_V(x),\,\, \forall x\in \mathbb{R}^n\setminus\{0\}. 
\end{equation}
Thus, multiplying~\eqref{equ:Vgamma:Lyap} by the function $a\in C^0(\mathbb{R}^n;\mathbb{R}_{\geq 0})$, defined through $x\mapsto a(x):= \|x\|_2/\gamma'(\|x\|_2)$, yields that $V_q\in C^{\infty}(\mathbb{R}^n;\mathbb{R}_{\geq 0})$ and $W_q\in C^{\infty}(\mathbb{R}^n\setminus\{0\};\mathbb{R}_{>0})$, defined by $x\mapsto V_q(x):=\frac{1}{2}\|x\|_2^2$ and $x\mapsto W_q(x):=-a(x)\widetilde{W}_V(x)$, comprise a $C^{\infty}$ Lyapunov pair for $\widetilde{F}$, also recall Example~\ref{ex:irr:nabla:V}. Equivalently, one can observe that the level sets of $V_{\gamma}$ are standard spheres. 

Then, as in Step (i), construct a homotopy through non-vanishing vector fields on $\mathbb{R}^n\setminus\{0\}$, in this case from $\widetilde{F}$ to $-\nabla V_q=-\partial_x$, with $V_q$ being \textit{a} Lyapunov function asserting stability of $0$ throughout. 
Then, we show there is homotopy from $\varphi^t(\cdot;\widetilde{F})$ to $\varphi^{t}(\cdot;-\partial_x)$ along semiflows, such that $0$ is GAS. 
    
    (v) Since we have constructed the desired (i.e., preserving stability) homotopy between any time-$t$ map $\varphi^{t}(\cdot;\mathcal{F}[X])$ to $\varphi^{t}(\cdot;-\partial_x)$, we observe that all these maps are continuous in $t$, in fact, all steps show that joint continuity holds so that we can conclude on the existence of the homotopy from the semiflow $\varphi(\cdot;\mathcal{F}[X])$ to $\varphi(\cdot;-\partial_x)$.  
\end{proof}

As Proposition~\ref{prop:GAS:homotopy:semiflow} is about \textit{existence}, we provide some examples below. 
\begin{example}[Homotopic vector fields, preserving stability]
\label{ex:examples:homotopy:semiflow}
\begin{upshape}
     It is known that the polynomial ODE $\dot{x}=X_1(x):= (-x_1+x_1x_2,-x_2)$ does not admit a polynomial Lyapunov function asserting $0$ is GAS~\cite{ref:ahmadi2011globally}. A non-polynomial Lyapunov function, however, is given by $(x_1,x_2)\mapsto V_1(x_1,x_2):=\tfrac{1}{2}\log(1+x_1^2)+\tfrac{1}{2}x_2^2$. To construct a homotopy from $\varphi(\cdot;X_1)$ to $\varphi(\cdot;-\partial_x)$, first construct the straight-line homotopy from $X_1$ to $-\nabla V_1$, with stability asserted throughout by $V_1$. Next, define $V(\cdot;s):=(1-s)V_1+sV_q$, for $s\in [0,1]$ and $x \mapsto V_q(x):=\tfrac{1}{2}\|x\|_2^2$. As for any $s\in [0,1]$, $\nabla V(x;s)=0\iff x=0$, $[0,1]\ni s \mapsto -\nabla V(\cdot;s)$ parametrizes a homotopy from $-\nabla V_1$ to $-\partial_x$, along vector fields that render $0$ GAS, asserted by $V(\cdot;s)$. Although $X_1$ does not admit a convex polynomial Lyapunov function, the negative gradient flow of the Lyapunov function corresponding to $X_1$ does \textit{cf}.~\cite{ref:JongeneelSchwan2024TAC}. 
    \exampleEnd
    \end{upshape}
\end{example}

    \begin{figure}
        \centering
        \includegraphics[trim={1cm 3cm 1cm 3cm},clip,scale=0.125]{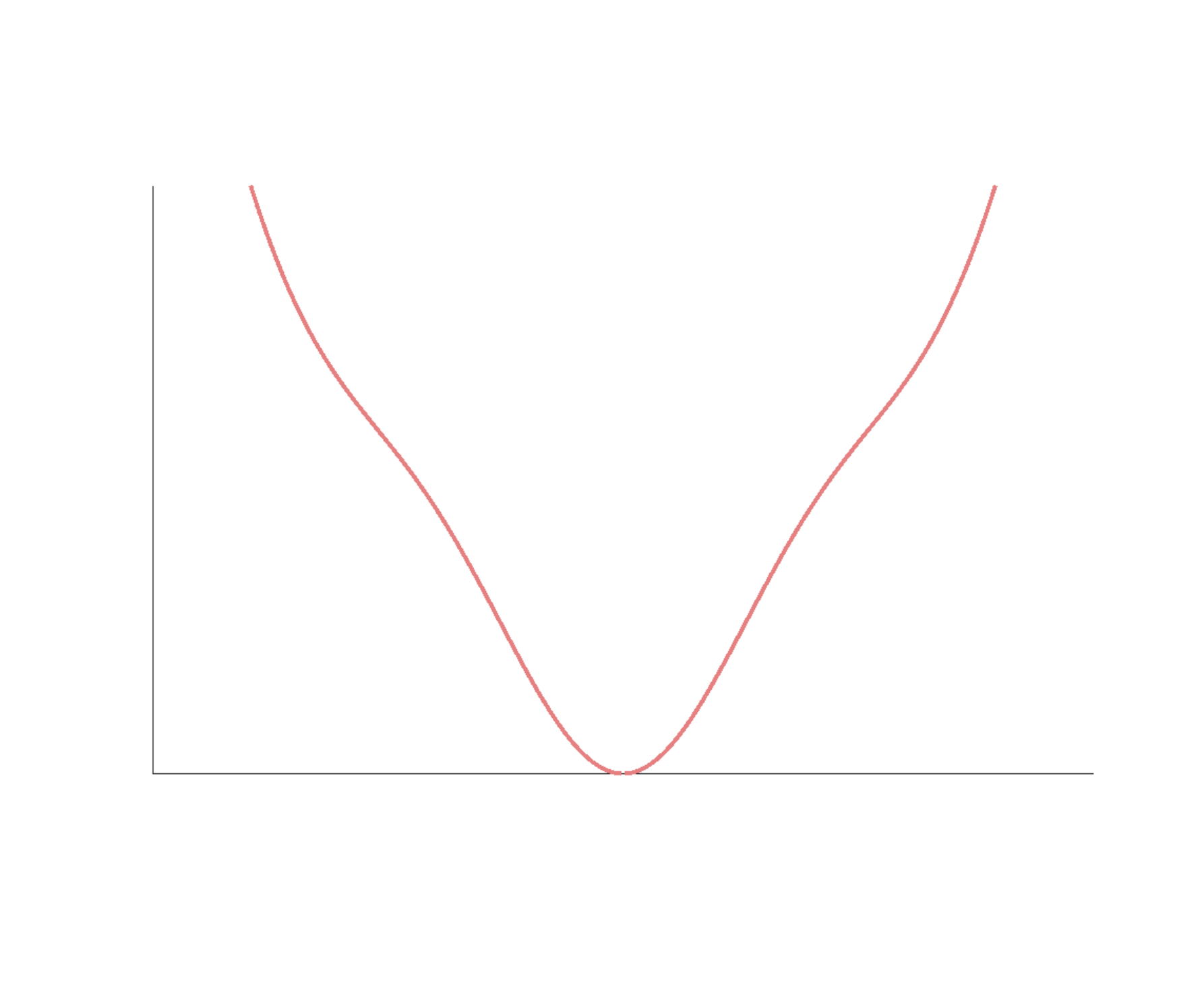}\hspace{-0.25cm}
        \includegraphics[trim={1cm 3cm 1cm 3cm},clip,scale=0.125]{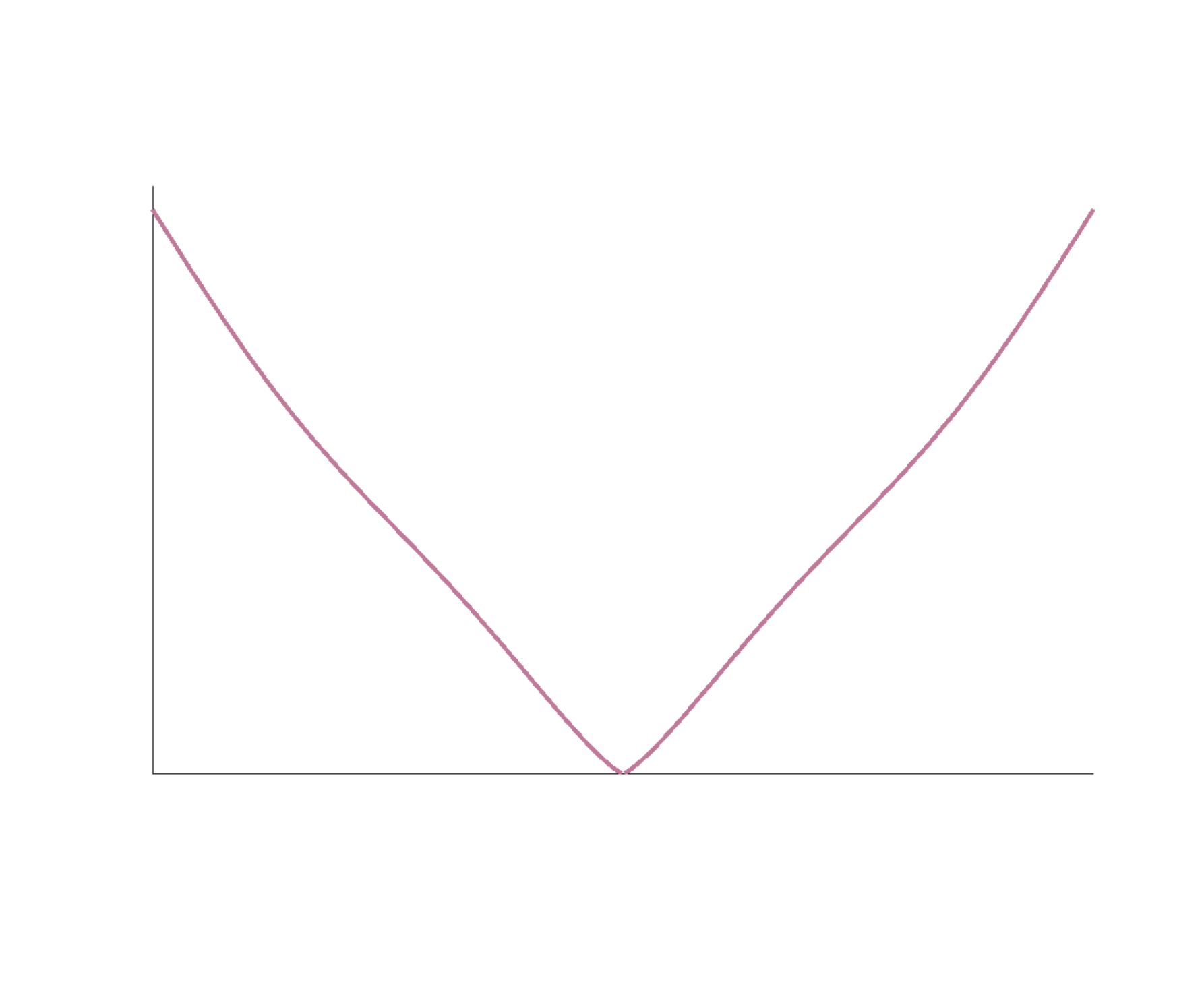}\hspace{-0.25cm}
                \includegraphics[trim={1cm 3cm 1cm 3cm},clip,scale=0.125]{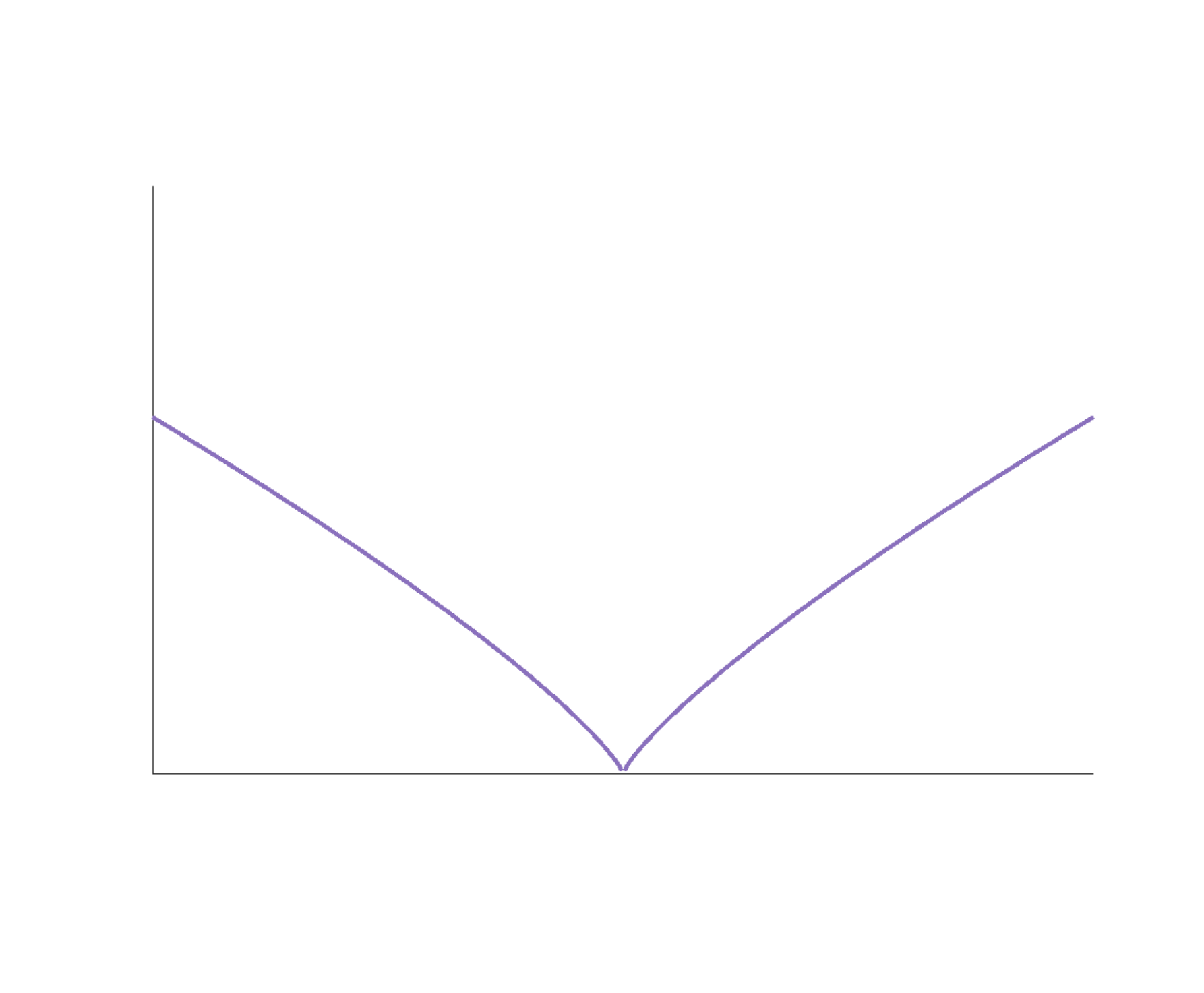}\hspace{-0.25cm}
         \includegraphics[trim={1cm 3cm 1cm 3cm},clip,scale=0.125]{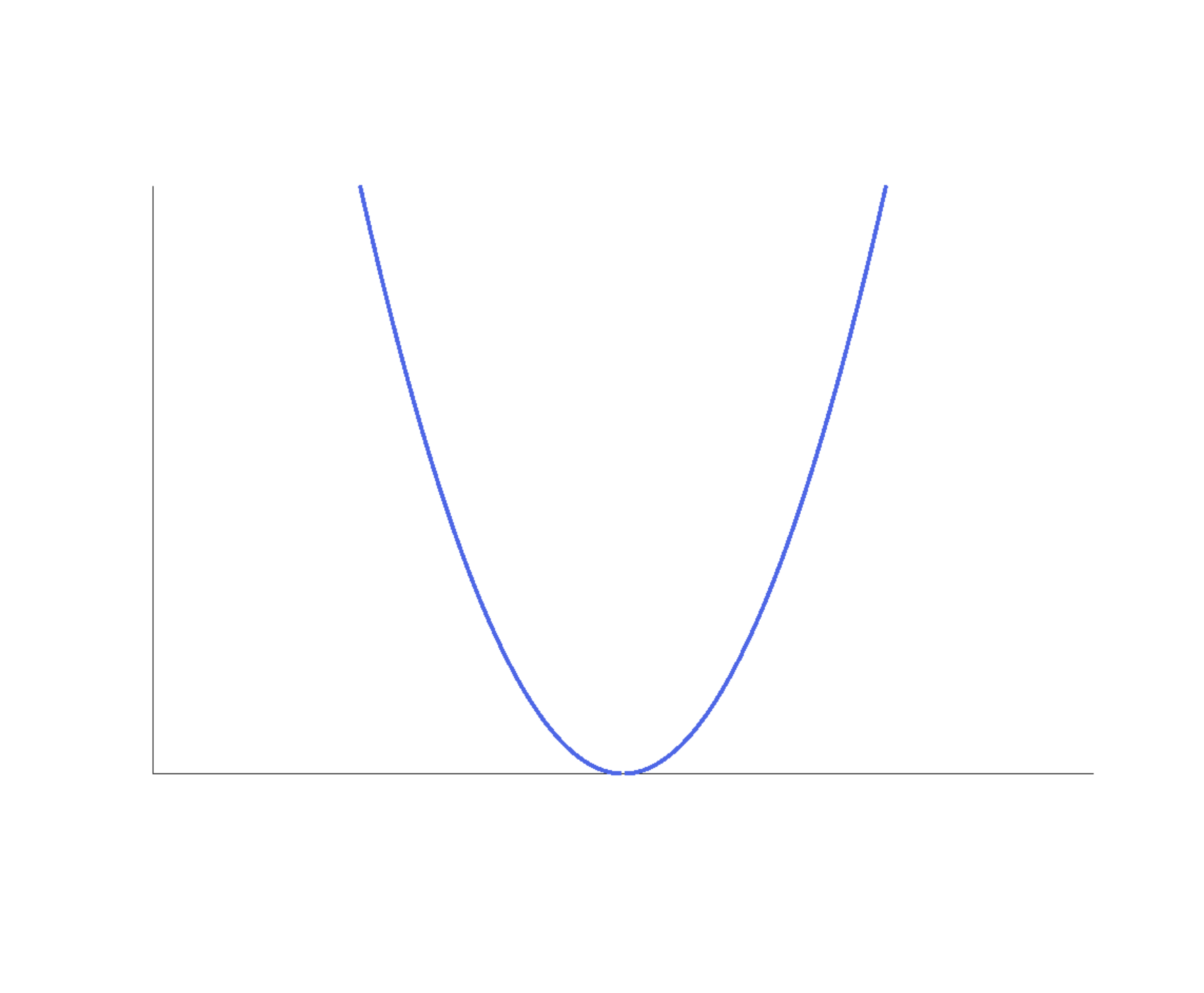}
        \caption{Example~\ref{ex:invextoconvex}: on the left, the graph of $v_i$ around $x=0$; and on the right, the graph of $v_q$ around $x=0$. In between, steps of the homotopy that connects $v_i$ to $v_q$, while preserving that $x=0$ is the global minimizer.}
        \label{fig:invextoconvex}
    \end{figure}
{The purpose of the following example is to illustrate the construction of the homotopy that appears in the proof of Proposition~\ref{prop:GAS:homotopy:semiflow}.}
\begin{example}[Homotopy from invexity to convexity]
\label{ex:invextoconvex}
\begin{upshape}
Consider a coercive, invex function (\textit{i.e.}, every critical point is a global minimizer) on $\mathbb{R}$ defined by $x\mapsto v_i(x):=\tfrac{1}{2}x^2+\tfrac{3}{2}\sin(x)^2$. Along the lines of the proof of Proposition~\ref{prop:GAS:homotopy:semiflow}, we can find a map $T$ such that $v_i(T^{-1}(x))=\gamma(|x|):=\sqrt{|x|}$ ({see that $\gamma(s)/\gamma'(s)\geq s$ for all $s\geq 0$}), and a path from $T$ to $\mathrm{id}_{\mathbb{R}}$, as given by
\begin{equation*}
    [0,1]\times \mathbb{R}\ni(s,x)\mapsto H_i(s,x) := 
        \mathrm{sgn}(x)\left(\frac{1+s}{2}x^2+(1-s)\frac{3}{2}\sin(x)^2\right)^{2-(3/2)s}, 
\end{equation*}
with $H_i(0,\cdot)=T$ and $H_i(1,\cdot)=\mathrm{id}_{\mathbb{R}}$ (\textit{e.g.}, $v_i=\sqrt{|H_i(0,\cdot)|}$). Next, consider the homotopy $(s',x)\mapsto x^{1/2+(3/2)s'}$ to construct the path from $v_i$ to $x\mapsto v_q(x):= x^2$, along continuous functions such that $0$ is the global minimizer throughout\footnote{For a simulation of this homotopy, see~\url{wjongeneel.nl/figinvex.gif}.}, see Figure~\ref{fig:invextoconvex}. 
For functions as simple as $v_i$ one can find simpler homotopies ({\textit{e.g.}, a straight-line}), however, to the best of our knowledge, being able to guarantee the mere existence of such a homotopy is new. Proposition~\ref{prop:GAS:homotopy:semiflow} provides us with the existence of a homotopy from a smooth Lyapunov function $V$ to $V_q$, along \textit{continuous}\footnote{It is not evident, and currently unknown, whether smoothness can be preserved throughout the homotopy, see Step (iii) of the proof of Proposition~\ref{prop:GAS:homotopy:semiflow}.} Lyapunov functions that assert $0\in \mathbb{R}^n$ is GAS along the homotopy. Differently put, we can find a homotopy from a coercive, invex function, to a convex function, such that along the homotopy the minimizer is preserved. This might be of independent interest.    
\exampleEnd
\end{upshape}
\end{example}

As we allow for a class of discontinuous vector fields, Proposition~\ref{prop:GAS:homotopy:semiflow} allows for an extension of the standard Hopf index, this has been pioneered by Gottlieb, \textit{e.g.}, see~\cite{ref:gottlieb1995index}. See also~\cite{ref:casagrande2008conley} for a Conley index applicable to discontinuous vector fields and see~\cite{ref:Kvalheim2021PHhybrid} for a hybrid Poincar{\'e}-Hopf theorem.

We point out that one could omit Step (i) of the proof of Proposition~\ref{prop:GAS:homotopy:semiflow}, yet, as is also shown in Example~\ref{ex:examples:homotopy:semiflow}, it is typically convenient to pass through a gradient flow. We also remark that the origin plays no particular role in Proposition~\ref{prop:GAS:homotopy:semiflow}, as it should. In fact, if $X$ is such as in Proposition~\ref{prop:GAS:homotopy:semiflow}, yet, the equilibrium point is now arbitrary, we can still construct a homotopy between $\varphi(\cdot;\mathcal{F}[X])$ and $\varphi(\cdot;-\partial_x)$ such that along the homotopy \textit{some} point is GAS. For instance, the following family of time-$t$ maps parametrizes a homotopy between the flows corresponding to the ODEs $\dot{x}=-x$ and $\dot{x}=-(x-\bar{x})$: $(t,x)\mapsto \varphi^{t}(x;s) = e^{-t}x + s(I_n-e^{-t}I_n)\bar{x}$, for $s\in [0,1]$. Indeed, $s\bar{x}$ is GAS along the homotopy (\textit{e.g.}, consider the Lyapunov function $x\mapsto V(x;s):=\frac{1}{2}(x-s\bar{x})^2$). 

Next, we generalize Proposition~\ref{prop:GAS:homotopy:semiflow} to smooth manifolds, which is almost immediate as the GAS property heavily restricts the class of manifolds, \textit{e.g.}, see~\cite[Ch. V.3]{ref:bhatia1970stability}. We assume that our manifolds are Hausdorff and second countable. Regarding our vector fields, we assume simply the following.

\begin{assumption}[Vector field regularity on $M^n$]
\label{ass:reg:X:M}
Our vector fields are locally Lipschitz on $M^n$. 
\end{assumption}

Indeed, one can work with less regular vector fields and fully generalize Proposition~\ref{prop:GAS:homotopy:semiflow} to manifolds, \textit{i.e.}, via~\cite[Cor. 13]{ref:mayhew2011topological}. We refrain from introducing further technicalities and work with Assumption~\ref{ass:reg:X:M}. 

\begin{theorem}[Stability preserving semiflows]
\label{thm:GAS:homotopy:semiflow:M}
Let $M^n$ be a smooth manifold, for $n\neq 5$, and let $p^{\star}\in M^n$ be GAS under both the vector fields $X$ and $Y$, with both $X$ and $Y$ satisfying Assumption~\ref{ass:reg:X:M}. Then, the flow $\varphi(\cdot;X)$ is homotopic to $\varphi(\cdot;Y)$, along semiflows that preserve $p^{\star}$ being GAS. 
\end{theorem}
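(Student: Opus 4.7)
The plan is to reduce the problem on $M^n$ to the Euclidean setting of Proposition~\ref{prop:GAS:homotopy:semiflow} by exploiting the rigidity that a globally asymptotically stable equilibrium imposes on $M^n$. Since $p^{\star}$ is GAS under the locally Lipschitz vector field $X$ (or $Y$), converse Lyapunov theory (\textit{e.g.}, Wilson~\cite{ref:wilson1967structure}) yields a smooth proper Lyapunov function $V:M^n\to \mathbb{R}_{\geq 0}$ with $V^{-1}(0)=\{p^{\star}\}$, whose regular sublevel sets are compact $n$-disks and whose regular level sets are topological spheres. Combining this with the generalized Poincar\'e conjecture in $\mathsf{Top}$ for every $n$ and in $\mathsf{Diff}$ for $n\neq 5$---essentially the argument appearing in Steps~(ii) and~(iv) of the proof of Proposition~\ref{prop:GAS:homotopy:semiflow}---I would extract a diffeomorphism $\phi:M^n\to \mathbb{R}^n$ with $\phi(p^{\star})=0$.

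Next, I would push the vector fields forward: set $\tilde{X}:=\phi_{*}X$ and $\tilde{Y}:=\phi_{*}Y$. Since $\phi$ is a diffeomorphism and $X,Y$ are locally Lipschitz, both $\tilde{X}$ and $\tilde{Y}$ satisfy Assumption~\ref{ass:reg:X} and render $0\in \mathbb{R}^n$ GAS. Hence Proposition~\ref{prop:GAS:homotopy:semiflow} applies and produces, for each, a GAS-preserving homotopy of semiflows $\Phi^X(\cdot;\cdot)$ and $\Phi^Y(\cdot;\cdot)$ on $\mathbb{R}^n$, connecting $\varphi(\cdot;\tilde{X})$ and $\varphi(\cdot;\tilde{Y})$, respectively, to the canonical semiflow $\varphi(\cdot;-\partial_x)$. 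Concatenating $\Phi^X$ with the reverse of $\Phi^Y$ (after the standard rescaling of the homotopy parameter) yields a GAS-preserving homotopy $s\mapsto \Psi(\cdot;s)$ of semiflows on $\mathbb{R}^n$ from $\varphi(\cdot;\tilde{X})$ to $\varphi(\cdot;\tilde{Y})$. Conjugating back by $\phi$, the family $(t,p,s)\mapsto \phi^{-1}\bigl(\Psi^{t}(\phi(p);s)\bigr)$ is the desired homotopy on $M^n$: it is jointly continuous because $\phi$ and $\phi^{-1}$ are continuous, it interpolates $\varphi(\cdot;X)$ and $\varphi(\cdot;Y)$, and $p^{\star}$ is GAS under each conjugated semiflow by invariance of GAS under topological conjugation.

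The main obstacle is the first step, namely producing the diffeomorphism $\phi$. For $n\leq 3$ this is classical; for $n\geq 6$ the $h$-cobordism argument already invoked in Step~(iv) of Proposition~\ref{prop:GAS:homotopy:semiflow} applies verbatim to the exhaustion of $M^n$ by compact sublevel sets of $V$ glued together along their boundaries (standard spheres up to diffeomorphism), and $n=4$ must be handled via Freedman-type topological arguments together with care about smooth structures. The exclusion $n=5$ in the statement precisely mirrors, at the manifold level, the same smooth Poincar\'e obstruction that already appears in the Euclidean case. Once $\phi$ is in hand, the remaining push-forward, concatenation and conjugation steps are routine, so the technical heart of the proof is the diffeomorphism extraction, while the homotopy construction itself is inherited from Proposition~\ref{prop:GAS:homotopy:semiflow}.
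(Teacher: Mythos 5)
Your proposal follows essentially the same route as the paper: obtain a diffeomorphism $M^n\to\mathbb{R}^n$ sending $p^{\star}$ to $0$, push both vector fields forward (where they satisfy Assumption~\ref{ass:reg:X}), apply Proposition~\ref{prop:GAS:homotopy:semiflow} to each, and conclude by concatenation/transitivity after conjugating back; the conjugation and continuity arguments you give are exactly what is needed. The one place you diverge is the extraction of the diffeomorphism: the paper does not rebuild it from sublevel sets via the Poincar\'e conjecture and the $h$-cobordism theorem, but gets it in one step from the smooth converse Lyapunov function and the fact that the time-$t$ maps of $-\mathrm{grad}\,V$ are diffeomorphisms, citing Wilson's structure theorem \cite[Thm.~2.2]{ref:wilson1967structure}; this is cleaner and, in particular, avoids your problematic suggestion for $n=4$, where Freedman-type arguments only yield homeomorphisms and smooth structures on open $4$-manifolds are exactly where such an argument would break. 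Correspondingly, the exclusion $n\neq 5$ is inherited solely from Proposition~\ref{prop:GAS:homotopy:semiflow}, not from any obstruction at the manifold-to-$\mathbb{R}^n$ step.
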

\begin{proof}
First, since $p^{\star}$ is GAS under $X$, there is $V\in C^{\infty}(M^n;\mathbb{R}_{\geq 0})$ such that $X(V)=L_X V<0$ on $M^n\setminus \{p^{\star}\}$~\cite{ref:fathi2019smoothing}. Now fix some Riemannian metric, which exists~\cite[Ch. 13]{Lee2}, and define the Riemannian gradient $\mathrm{grad}\,V$. It follows that $-\mathrm{grad}\,V(V)<0$ on $M^n\setminus\{p^{\star}\}$. Then, for any $t\in \mathbb{R}$, the corresponding time-$t$ map $p\mapsto \varphi^t(p;-\mathrm{grad}\,V)$ is a diffeomorphism. This implies there is a diffeomorphism $\psi:M^n\to \mathbb{R}^n$~\cite[Thm. 2.2]{ref:wilson1967structure}. Next, define the diffeomorphism $\widetilde{\psi}$ by ${M}^n\ni p\mapsto\widetilde{\psi}(p):=\psi(p)-\psi(p^{\star})\in \mathbb{R}^n$, such that $\widetilde{\psi}(p^{\star})=0$. Now set $x:=\widetilde{\psi}(p)$ and consider
    \begin{align*}
        \dot{x} = -\mathrm{D}\widetilde{\psi}(\widetilde{\psi}^{-1}(x))X(\widetilde{\psi}^{-1}(x))=:X_0(x). 
    \end{align*}
    Indeed, the vector field $X_0$ meets the criteria of Proposition~\ref{prop:GAS:homotopy:semiflow}. Then, observe that  $\varphi^{t}(\cdot;X) = \widetilde{\psi}^{-1}\circ\varphi^{t}(\cdot;X_0)\circ \widetilde{\psi}$, which implies that $\varphi^{t}(\cdot;X)$ can be homotoped to $\widetilde{\psi}^{-1}\circ\varphi^{t}(\cdot;-\partial_x)\circ \widetilde{\psi}$ such that $p^{\star}$ remains GAS along the homotopy. We can do the exact same for the vector field $Y$. Then, since $\widetilde{\psi}$ works for both $X$ and $Y$ we can conclude by transitivity.  
\end{proof}

    \begin{figure}
        \centering
        \includegraphics[trim={3cm 3cm 3cm 3cm},clip,scale=0.15]{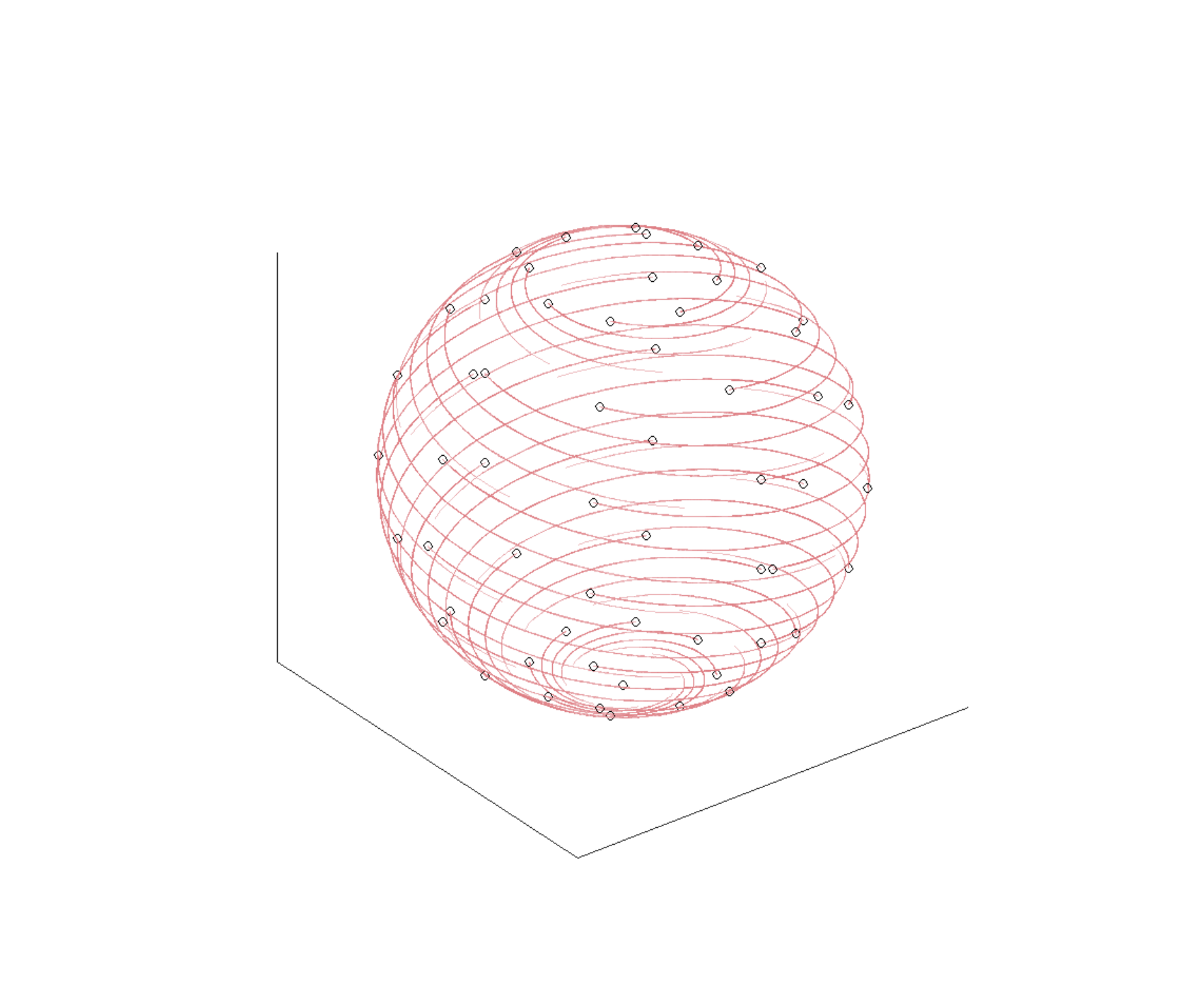}\hspace{-0.5cm}
        \includegraphics[trim={3cm 3cm 3cm 3cm},clip,scale=0.15]{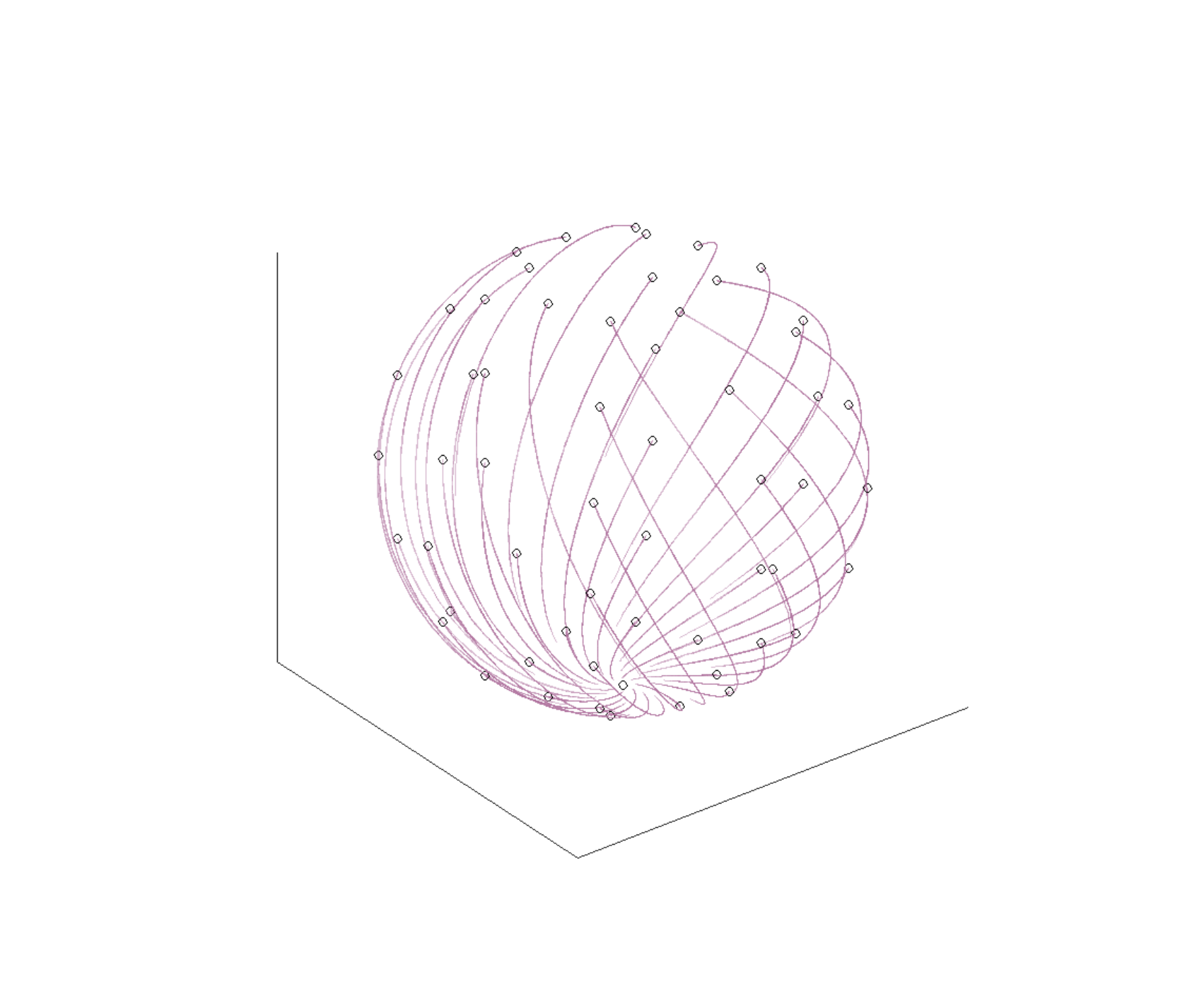}\hspace{-0.5cm}
                \includegraphics[trim={3cm 3cm 3cm 3cm},clip,scale=0.15]{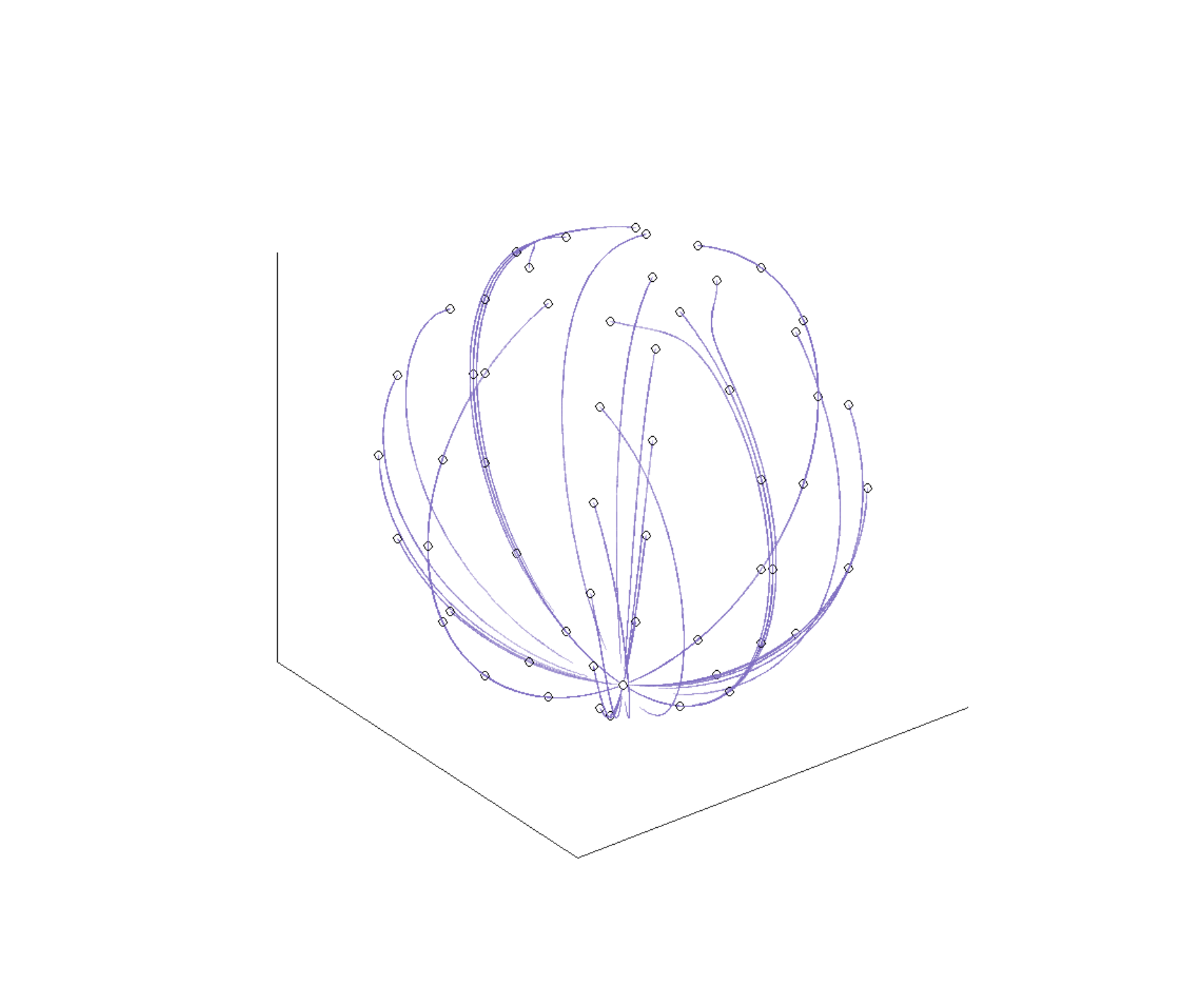}\hspace{-0.5cm}
         \includegraphics[trim={3cm 3cm 3cm 3cm},clip,scale=0.15]{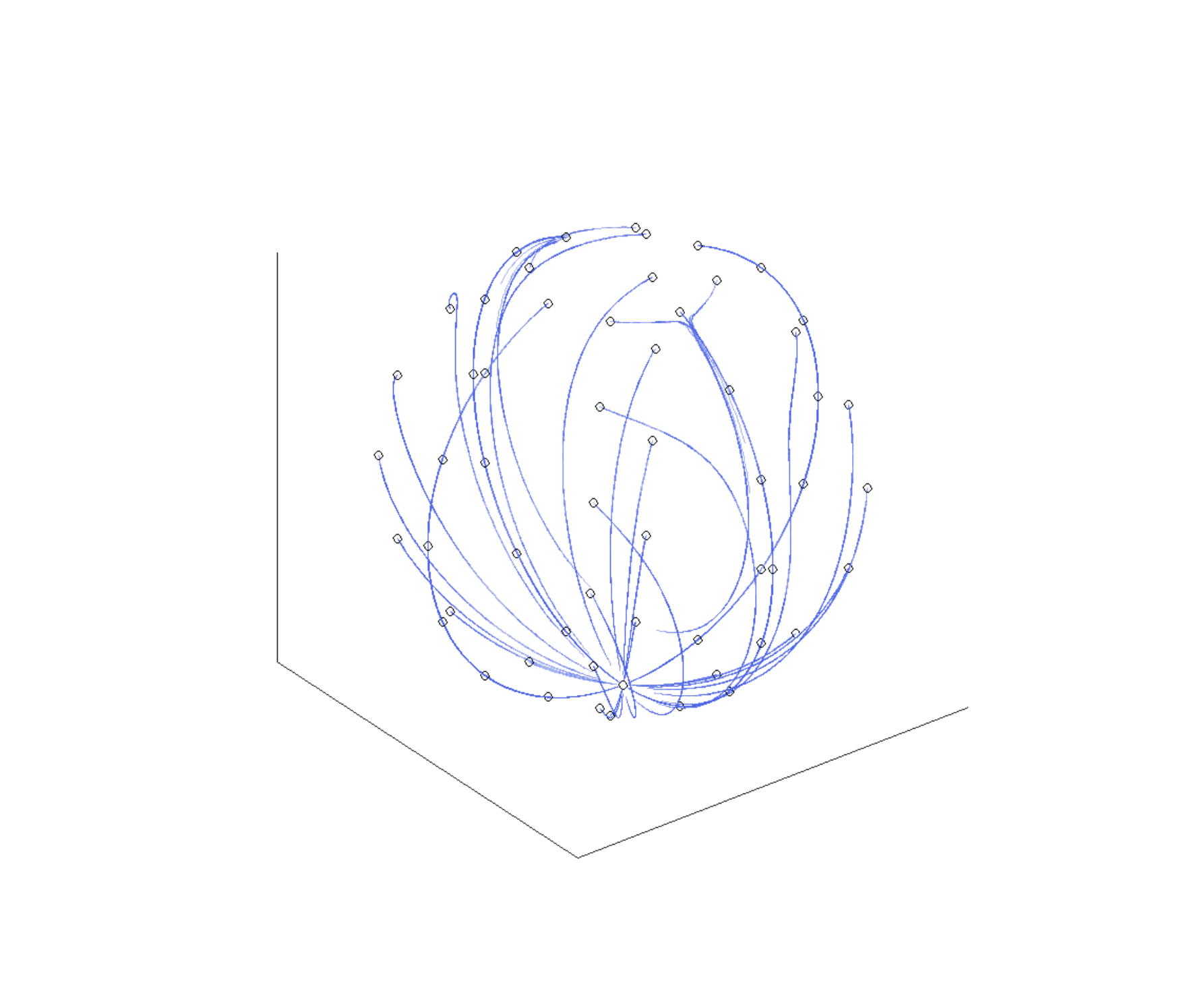}
        \caption{Example~\ref{ex:StereoS2}: on the left, some flow lines under $(\Pi_N^{-1})_* X$; and on the right, some flow lines under $(\Pi_N^{-1})_* Y$. In between, two steps of the homotopy that connects $(\Pi_N^{-1})_* X$ to $(\Pi_N^{-1})_* Y$, through vector fields that render $S$ GAS on $\mathbb{S}^2\setminus \{N\}$.}
        \label{fig:StereoS2}
    \end{figure}
\begin{example}[Equilibria on $\mathbb{S}^2$]
\label{ex:StereoS2}
\begin{upshape}
To exemplify Theorem~\ref{thm:GAS:homotopy:semiflow:M}, one might think of rendering the South Pole $S$ an attractor on $\mathbb{S}^2\setminus \{N\}$, for $N$ the North Pole. To that end, consider the vector fields $X$ and $Y$ on $\mathbb{R}^2$, defined through
\begin{align*}
X(x) &:= (-0.1x_1-x_2)\partial_{x_1}+(x_1-0.1x_2)\partial_{x_2},\\ Y(x) &:= (-x_1-x_1x_2^2)\partial_{x_1}+(-x_2+x_1^2x_2)\partial_{x_2}.
\end{align*}
The origin is GAS under both $X$ and $Y$ (\textit{e.g.}, consider the canonical quadratic Lyapunov function). Let $\Pi_{N}$ be the stereographic projection from $\mathbb{S}^2\setminus \{N\}$ to $\mathbb{R}^2$. Then, to transform $X$ and $Y$ to vector fields on $\mathbb{S}^2\setminus\{N\}$, we construct the \textit{pushforwards} $(\Pi_N^{-1})_* X$ and $(\Pi_N^{-1})_* Y$, see Figure~\ref{fig:StereoS2}. Exploiting this structure and our previous work~\cite{ref:JongeneelSchwan2024TAC}, we can construct an explicit homotopy between these two vector fields that preserves stability of $S$ on $\mathbb{S}^2\setminus
\{N\}$\footnote{For a numerical simulation of the homotopy, see \url{wjongeneel.nl/figStereoS2.gif}. Note that the homotopy is through the canonical vector field indeed.}. 
\exampleEnd
\end{upshape}
\end{example}

\begin{remark}[Weaker notions of stability]
\begin{upshape}
    In general, one cannot relax GAS to mere stability. A reason being that the (Hopf) index of Lyapunov stable equilibria is not fixed~\cite[Sec. 52]{ref:krasnosel1984geometrical}, yet, this index is a homotopy invariant (\textit{e.g.}, for GAS the index is fixed). In Section~\ref{sec:ISS}, we do elaborate on ISS. \remarkEnd
    \end{upshape} 
\end{remark}

{
\section{Input-to-State Stability}
\label{sec:ISS}
As in~\cite{ref:grune1999asymptotic},
we can also study ``\textit{disturbed}'' systems of the form
\begin{equation}
\label{equ:system}
    \dot{x}=f(x,d),
\end{equation}
where $f:\mathbb{R}^n\times D\to \mathbb{R}^n$, with $D\subseteq \mathbb{R}^m$, is continuous, and locally Lipschitz on $\mathbb{R}^n\setminus \{0\}\times D$. 
We let $\mathscr{D}_I$ denote the set of measurable, locally essentially bounded functions $\mathbb{R}\supseteq I\ni t\mapsto d(t)\in D$, with $d(\cdot)\in \mathscr{D}$ overloading notation, indicating any function of appropriate length. Whenever relevant, we do assume that our solutions are forward complete (\textit{e.g.}, we can appeal to \cite[Cor. 2.11]{ref:angeli1999forward}), we denote them using semiflow notation as $\varphi^t(x_0,d(\cdot))$. 

Leveraging intuition from linear systems theory, input-to-state stability was derived as a stability notion invariant under coordinate transformations (homeomorphisms that fix $0$) \cite{ref:sontagbasicconcepts2008input}. In particular, a system~\eqref{equ:system} is said to be {\textit{Input-to-State Stable}} (ISS) when there are $\beta\in \mathcal{KL}$ and $\gamma\in \mathcal{K}_{\infty}$ such that
\begin{equation*}
\label{equ:ISS:Linf}
\|\varphi^t(x_0,d(\cdot))\|\leq \max\left\{\beta(\|x_0\|,t), \gamma(\|d\|_{\infty})\right\}
\end{equation*}
for all $t\geq 0$, $x_0\in \mathbb{R}^n$ and any $d(\cdot)\in \mathscr{D}$. For the corresponding Lyapunov theory, see~\cite{ref:sontag1995characterizations}. Constraining the transient, a system~\eqref{equ:system} is said to be {\textit{Input-to-State Exponentially Stable}} (ISES) when there are constants $M,a\in \mathbb{R}_{>0}$ and a $\alpha\in \mathcal{K}_{\infty}$ such that
\begin{equation}
\label{equ:ISES:max}
    \|\varphi^t(x_0,d(\cdot))\|\leq \max\left\{Me^{-a t}\|x_0\|, \alpha(\|d\|_{\infty})\right\}
\end{equation}
for all $t\geq 0$, $x_0\in \mathbb{R}^n$ and any $d(\cdot)\in \mathscr{D}$. It can be shown \cite[Thm. 3]{ref:grune1999asymptotic}, that there is always a coordinate transformation that brings an ISS system into one that is ISES with normalized constants, that is, $M=a=1$.  

Inspired by the above, it seems plausible that $\dot{x}=-x+d$ fulfils the role of ``\textit{canonical ISS system}'' on $\mathbb{R}^n$. For instance, recall that the solution satisfies $\varphi^t(x_0,d(\cdot)) = e^{-t}x_0 + \int^t_0 e^{-(t-\tau)}d(\tau) \mathrm{d}\tau$ for any $t\geq 0$, $x_0\in \mathbb{R}^n$ and $d(\cdot)\in \mathscr{D}$. 
Therefore, $\|\varphi^t(x_0,d(\cdot))\|\leq \max\{2|e^{-t}|\|x_0\|,2 \|d\|_{\infty}\}$.
In that sense, $\dot{x}=-x+d$ is canonically ISS due to the simple comparison functions we can employ. Moreover, set $x\mapsto S(x):=\frac{1}{2}\|x\|_2^2$, it follows that $\langle \partial_x S(x),-x+d\rangle\leq \tfrac{1}{2}(\|d\|_2^2-\|x\|_2^2)$ for all $x,d\in \mathbb{R}^n$
and thus
\begin{equation}
\label{equ:canonical:L2}
    \int^t_0 \|\varphi^{\tau}(x_0,d(\cdot))\|_2^2\mathrm{d}\tau \leq \|x_0\|_2^2 + \int^t_0 \|d(\tau)\|_2^2 \mathrm{d}\tau
\end{equation}
for all $t\geq 0$, $x_0\in \mathbb{R}^n$ and any $d(\cdot)\in \mathscr{D}$, \textit{e.g.}, see \cite[Ch. 8]{ref:AvdSL2}. However, this means that $\dot{x}=-x+d$ can be understood, in particular, as a canonical system with a finite (unitary) linear $L_2$ gain. Indeed, \cite[Thm. 4]{ref:grune1999asymptotic} shows that an ISES system can be transformed into a system with such an $L_2$ gain.

Here, we focus on homotopies through coordinate transformations of $x$ and $d$, for otherwise we could ``remove'' the disturbance, \textit{e.g.}, consider $[0,1]\ni \theta \mapsto f(x,\theta d)$. Now, to exemplify why the existence of a homotopy from any ISS system, through coordinate transformation, to $\dot{x}=-x+d$ is too strong, consider the following example. Example~\ref{ex:no:homotopy} implies in particular that a na\"ive generalization of Section~\ref{sec:main} to ISS is impossible.
\begin{example}[No canonical system]
\label{ex:no:homotopy}
\begin{upshape}
    Consider the linear ISS system
    \begin{equation}
    \label{equ:system:ex:2}
        \begin{pmatrix}
\dot{x}_1\\ \dot{x}_2
        \end{pmatrix} = 
        \begin{pmatrix}
            -1 & 0\\
            0 & -1
        \end{pmatrix}\begin{pmatrix}
{x}_1\\ {x}_2
        \end{pmatrix} +
        \begin{pmatrix}
            1 & 0\\
            0 & -1
        \end{pmatrix}\begin{pmatrix}
{d}_1\\ {d}_2
        \end{pmatrix},
    \end{equation}
or in short: $\dot{x}=Ax+R(d)$. As $R\notin \mathrm{Homeo}^+(\mathbb{R}^2;\mathbb{R}^2)$, there is no continuous path from $\mathrm{id}_{\mathbb{R}^2}$ to $R$ (or to $R^{-1}$ for that matter), through coordinate transformations. However, if we pick $x\mapsto S(x):=\tfrac{1}{2}\|x\|_2^2$ as our storage function then we again find that the system~\eqref{equ:system:ex:2} satisfies $\langle\partial_x S(x), Ax+R(d)\rangle\leq \tfrac{1}{2}(\|d\|_2^2-\|x\|_2^2)$ and thus, \eqref{equ:system:ex:2} satisfies the canonical $L_2$ gain bound~\eqref{equ:canonical:L2}. 
\exampleEnd
\end{upshape}
\end{example}

The way to interpret the upcoming proposition is as follows. Given a solution under~\eqref{equ:system}, assumed to be ISS. This solution is understood as fixed data and now someone gradually applies a change of coordinates to both $x$ and $d$. It turns out that by doing so, one can always transform this solution data into data satisfying~\eqref{equ:canonical:L2}, thus, into a system \textit{like} $\dot{x}=-x+d$. Indeed, it follows directly that ISS is preserved throughout, \textit{e.g.}, see~\cite{ref:kellett2015input} for more on ISS and coordinate transformations. However, comparing again to Section~\ref{sec:main}, we cannot readily say more beyond \textit{paths of coordinate transformations}, \textit{e.g.}, first one should topologize $\mathscr{D}$ to study continuity of $d(\cdot)\mapsto \varphi^t(x_0,d(\cdot))$. In this work we refrain from completing this study as it does not add to the central message while it requires significant technical machinery, however, without doing so we cannot formally discuss homotopies of \textit{solutions}\footnote{This is also why ``\textit{and ISS equals finite energy}'' is not part of our title \textit{cf}.~\cite{ref:grune1999asymptotic}.}. 

\begin{proposition}[Homotopy from ISS into a canonical $L_2$ gain]
\label{prop:homotopy:ISS:L2}
Let $n\neq 5$ and suppose that the system~\eqref{equ:system} is ISS, then we can always find a continuous path of coordinate transformations of both $x$ and $d$, transforming the solutions to \eqref{equ:system} into solutions of a system satisfying the canonical bound~\eqref{equ:canonical:L2}. 
\end{proposition}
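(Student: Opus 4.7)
The plan is to mirror the five-step proof of Proposition~\ref{prop:GAS:homotopy:semiflow}, replacing \cite[Prop.~1]{ref:grune1999asymptotic} by its ISS counterparts. Since~\eqref{equ:system} is ISS, I would first invoke \cite[Thm.~3]{ref:grune1999asymptotic} followed by \cite[Thm.~4]{ref:grune1999asymptotic} to obtain a homeomorphism $T:\mathbb{R}^n\to \mathbb{R}^n$ acting on the state and a homeomorphism $\widetilde{T}$ acting on the disturbance, each fixing $0$, such that the coordinate change $x=T(z)$, $d=\widetilde{T}(e)$ transforms~\eqref{equ:system} into a system for which $z\mapsto \tfrac{1}{2}\|z\|_2^2$ certifies the canonical $L_2$ bound~\eqref{equ:canonical:L2}, following the derivation recalled just before Example~\ref{ex:no:homotopy}.

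Second, I would show that $T$ can be chosen in $\mathrm{Homeo}^+(\mathbb{R}^n;\mathbb{R}^n)$. As in Step~(ii) of the proof of Proposition~\ref{prop:GAS:homotopy:semiflow}, the construction of $T$ is a radial straightening of level sets of an ISS-Lyapunov function whose only degree of freedom is the choice of homeomorphism from one such level set to $\mathbb{S}^{n-1}$, which can be post-composed with a coordinate reflection to fix the orientation. The transformation $\widetilde{T}$ is radial in $d$ (essentially a $\mathcal{K}_{\infty}$ reshaping of $\|d\|$), hence automatically orientation-preserving and isotopic to the identity through radial rescalings.

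Third, by the stable homeomorphism theorem invoked in the proof of Proposition~\ref{prop:GAS:homotopy:semiflow}, for $n\neq 5$ the map $T$ is stable and therefore isotopic to $\mathrm{id}_{\mathbb{R}^n}$ through a path $[0,1]\ni s\mapsto T_s$ with $T_0=\mathrm{id}_{\mathbb{R}^n}$, $T_1=T$, and each $T_s$ fixing $0$ (using the same recentering trick as in Step~(iii) of that proof). Paired with the path $s\mapsto \widetilde{T}_s$ from the previous step, this yields the desired continuous path of coordinate transformations. At the level of solutions, define
\begin{equation*}
    \xi_s(t;z_0,e(\cdot)):=T_s^{-1}\bigl(\varphi^t(T_s(z_0),\widetilde{T}_s\circ e(\cdot))\bigr),
\end{equation*}
so that $\xi_0$ returns the original solutions to~\eqref{equ:system} while $\xi_1$ are solutions to a system satisfying~\eqref{equ:canonical:L2}. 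Continuity in $s$ follows, as in Step~(iii) of Proposition~\ref{prop:GAS:homotopy:semiflow}, from the topological-group structure of $\mathrm{Homeo}^+(\mathbb{R}^n;\mathbb{R}^n)$ in the compact-open topology~\cite{ref:arens1946topologies}.

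The main obstacle is that since the $T_s$ are merely homeomorphisms, one cannot form a pointwise pulled-back vector field $(\mathrm{D}T_s)^{-1}f\circ T_s$ to describe each intermediate system as an ODE. I would sidestep this, as in the proof of Proposition~\ref{prop:GAS:homotopy:semiflow}, by working strictly with the transformed solutions $\xi_s$: the proposition's conclusion only requires a continuous path of coordinate transformations together with the gain bound at the endpoint $s=1$, and not a parametrized family of vector fields. Example~\ref{ex:no:homotopy} further indicates that any stronger conclusion of pointwise conjugation to $\dot{z}=-z+e$ along the path is infeasible in general.
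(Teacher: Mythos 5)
Your proposal is correct and follows essentially the same route as the paper: invoke \cite[Thm.~3]{ref:grune1999asymptotic} together with the machinery of Proposition~\ref{prop:GAS:homotopy:semiflow} (orientation-preserving choice of $T$, stable homeomorphism theorem, topological-group continuity) for the state transformation, then \cite[Thm.~4]{ref:grune1999asymptotic} for the disturbance transformation, observing that the latter is radial and hence isotopic to the identity. The paper additionally spells out the ISS-Lyapunov derivation of the explicit radial map $R(z)=(\widetilde{\alpha}(\|z\|_2))^{1/2}z/\|z\|_2$, but this is only a more concrete rendering of the step you sketch.
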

\begin{proof}

First, by \cite[Thm. 3]{ref:grune1999asymptotic} and Proposition~\ref{prop:GAS:homotopy:semiflow}, if \eqref{equ:system} is ISS, we can always find a continuous path of coordinate transformations of $x$, transforming the solutions to \eqref{equ:system} into solutions of a system that is ISES with normalized constants, while still admiting a representation of the form \eqref{equ:system}.

Second, as we may suppose that \eqref{equ:system} is ISES with normalized constants, then by \cite[Thm. 4]{ref:grune1999asymptotic} and arguments akin to Proposition~\ref{prop:GAS:homotopy:semiflow}, we can always find a continuous path of coordinate transformations of $d$, transforming the solutions to \eqref{equ:system} into solutions satisfying the canonical $L_2$ bound~\eqref{equ:canonical:L2}. To be somewhat explicit, 
    since~\eqref{equ:system} is ISES, then by~\eqref{equ:ISES:max}, 
    if 
    \begin{equation}
    \label{equ:xsupd}
        \|x\|_2\geq e^s \alpha \left(\textstyle\sup_{\tau\in [0,s]}\|d(\tau)\|_2\right)
    \end{equation}
    then $\|\varphi^t(x,d(\cdot))\|_2\leq e^{-t}\|x\|_2$ for all $t\in [0,s]$. Let $V:\mathbb{R}^n\to \mathbb{R}_{\geq 0}$ be defined through $x\mapsto V(x):=\|x\|_2^2$, then if \eqref{equ:xsupd} holds we readily get
    \begin{equation}
    \label{equ:ode:V}
        V(\varphi^t(x,d(\cdot)))\leq e^{-2t}V(x)
    \end{equation}
    for all $t\in [0,s]$. This must be true in particular for constant disturbances, say $t\mapsto d(t):=d'\in D$ for all $t\geq 0$, thus, by subtracting $V(\varphi^0(x,d'))=e^{-2\cdot 0}\|x\|_2^2=V(x)$ on both sides of \eqref{equ:ode:V}, dividing by $t$ and considering $t\to 0^+$ we get that $\|x\|_2 \geq \alpha(\|d'\|_2)$ implies that $L_{f(\cdot,d')} V(x):=\langle \partial_xV(x), f(x,d')\rangle \leq -2 V(x) \leq - V(x)$. Now, akin to \cite[Rem. 2.4]{ref:sontag1995characterizations}, define
    \begin{equation*}
        \widetilde{\alpha}(r):= \sup_{\|\xi\|_2\leq \alpha(r)\,\|u\|_2\leq r}2\cdot\langle f(\xi,u),\xi\rangle+ V(\xi).
    \end{equation*}
    Evidently, as $\widetilde{\alpha}\in \mathcal{K}_{\infty}$, then for $\|x\|_2\geq \alpha(\|d'\|_2)$, we have that
    \begin{equation}
        \label{equ:ISS:ineq}
        L_{f(\cdot,d')}V(x)\leq -V(x) + \widetilde{\alpha}(\|d'\|_2).
    \end{equation}
    On the other hand, for $\|x\|_2\leq \alpha(\|d'\|_2)$, \eqref{equ:ISS:ineq} also holds true. As $d'\in D$ was arbitrary, \eqref{equ:ISS:ineq} holds true for any $d'\in D$. Now, let $\widehat{f}(\cdot,v):=f(\cdot,R^{-1}(v))$ and see that if, towards~\eqref{equ:canonical:L2}, the goal is to get $L_{\widehat{f}(\cdot,v)}V(x)\leq - V(x) + \|v\|_2^2$, then we could select $\mathbb{R}^m\ni z\mapsto R(z):= (\widetilde{\alpha}(\|z\|_2))^{1/2}(z/\|z\|_2)$, with $R(0):=0$. This map is $C^{\infty}$ on $\mathbb{R}^m\setminus \{0\}$ and continuous at $0$ as $\widetilde{\alpha}\in \mathcal{K}_{\infty}$. 
    For $R$ to be orientation preserving, we can again introduce a map $\rho:\mathbb{S}^{m-1}\to \mathbb{S}^{m-1}$ of appropriate degree, like in Step (ii) of the proof of Proposition~\ref{prop:GAS:homotopy:semiflow}, \textit{e.g.}, $z \mapsto (\widetilde{\alpha}(\|z\|_2))^{1/2}\rho\left(z/\|z\|_2 \right).$
\end{proof}
 }

\section{A view from optimal transport}
\label{sec:OT}


Suppose there is a smooth Lyapunov function $V$, asserting that $0\in \mathbb{R}^n$ is GAS under some ODE $\dot{x}=X(x)$. 
To refine our understanding beyond Theorem~\ref{prop:GAS:homotopy:semiflow} and towards a similar result for vector fields, we note that our desire relates to \textit{optimal transport} (OT). Let $\mu_0, \mu_1\in \mathscr{P}(\mathbb{R}^n)$ be Borel probability measures on $\mathbb{R}^n$. Now suppose that $\mathrm{d}\mu_0=\kappa_0 e^{-V}\mathrm{d}\lambda^n$ and $\mathrm{d}\mu_1=\kappa_1 e^{-V_q}\mathrm{d}\lambda^n$, for $\kappa_0, \kappa_1$ normalization constants. 
Can we transport $\mu_0$ to $\mu_1$ along sufficiently regular measures, preserving unimodality? Suppose the answer is yes, then we can construct a path of densities $[0,1]\ni s \mapsto f(x;s)$, with $f(x;0)=e^{-V(x)}$ and $f(x;1)=e^{-V_q(x)}$. Now, if $f$ is sufficiently regular, we can conclude that $\nabla \log f(x;s)$ renders $0$ GAS, for each fixed $s$. 
It is particularly interesting that the standard Gaussian measure results in the canonical ODE $\dot{x}=-x$. 
In what follows we touch upon this viewpoint. 

The quadratic ``\textit{Monge formulation}'' of OT on $\mathbb{R}^n$ is as follows, let $\mu,\nu\in \mathscr{P}_2(\mathbb{R}^n):=\{\mu\in \mathscr{P}(\mathbb{R}^n):\int \|x\|_2^2 \mathrm{d}\mu(x)<+\infty\}$, then we want to solve
\begin{equation}
\label{equ:Monge}
    \inf_{T:T\#\mu=\nu}\int_{\mathbb{R}^n} \|x-T(x)\|_2^2 \mathrm{d}\mu(x), 
\end{equation}
where $T\# \mu$ denotes the \textit{pushforward} of $\mu$. Brenier \cite{ref:brenier1991polar} showed that if $\mu \ll \lambda^n$, then, there is a convex map $\phi:\mathbb{R}^n\to \mathbb{R}$, being $\mu$-a.e. differentiable, such that $\nabla \phi\# \mu = \nu$ solves~\eqref{equ:Monge}. We do a simple example.

\begin{example}[Gaussian optimal transport]
\label{ex:Gaussian:OT}
\begin{upshape}
    Given two zero-mean Gaussian measures $\mu_0$ and $\mu_1$ on $\mathbb{R}^n$, with covariance matrices $\Sigma_0$ and $\Sigma_1$, the optimal map in the sense of~\eqref{equ:Monge} is simply $x\mapsto T^{\star}(x):=Ax$, for $A:=\Sigma_0^{-1/2}(\Sigma_0^{1/2}\Sigma_1\Sigma_0^{1/2})^{1/2}\Sigma_0^{-1/2}$, \textit{e.g.}, see~\cite[Rem. 2.31]{ref:peyre2019computational}. Then, for $\mu(\cdot;s):=T(\cdot;s)\# \mu_0$ with $T(\cdot;s):=(1-s)\mathrm{id}_{\mathbb{R}^n}+sT$ and $s\in [0,1]$ we have that 
    \begin{equation*}
        s\mapsto\Sigma(s) := \Sigma_0^{-1/2}\left((1-s)\Sigma_0 + s(\Sigma_0^{1/2}\Sigma_1\Sigma_0^{1/2})^{1/2} \right)^2\Sigma_0^{-1/2}\succ 0.
    \end{equation*}
    Now consider the density $\mathbb{R}^n\ni x\mapsto\rho(x;s) := \kappa(s)^{-1}e^{-\frac{1}{2}\langle x, \Sigma(s)^{-1}x\rangle}$ for $\kappa(s)^2:= (2\pi)^n\mathrm{det}(\Sigma(s))$ and set $V(\cdot;s):=-\log \rho(\cdot;s)$. Let $\Sigma_1=I_n$, then it follows that the family of ODEs $\dot{x}=-\nabla V(x;s)$, comprises a homotopy from $\dot{x}=-\nabla V(x;0)$ to $\dot{x}=-x$, through vector fields such that $0$ is GAS (\textit{e.g.}, consider the Lyapunov function $x\mapsto \widetilde{V}(x;s):=\frac{1}{2}\langle x, \Sigma(s)^{-1}x\rangle$). 
    \exampleEnd
    \end{upshape}
\end{example}

\section{Conclusion and future work}
We have provided a step towards better understanding Conley's converse question in some generality, yet, many open problems remain. Although directly working with flows has benefits, \textit{e.g.}, see~\cite{ref:aguiar2023universal}, the main open problem is the extension to vector fields and generic attractors. Several other questions are as follows. 

\noindent{{Open problem 1}}: \textit{characterize stability of~\eqref{equ:flow:homotopy:sontag:coron} throughout the homotopy.}

\noindent{{Open problem 2}}: \textit{prove or disprove that Proposition~\ref{prop:GAS:homotopy:semiflow} holds for $n=5$. A counterexample would disprove the smooth Poincar\'e conjecture for $\mathbb{S}^4$.}

Although we work with semiflows, {to leverage converse Lyapunov theory we rely on vector fields generating them}. Removing this condition is non-trivial, illustrated by~\cite[Sec. 7]{ref:fathi2019smoothing}, but also not futile, as illustrated below.  

\begin{example}[An attractor on the mapping torus]
\begin{upshape}
    To construct an example of a semiflow that does not correspond to a vector field, one can appeal to maps with a ``negative orientation'', \textit{e.g.}, a smooth vector field $X$ always results in a flow $\varphi(\cdot;X)$ such that the diffeomorphism $\varphi^t(\cdot;X)$ is isotopic to the identity\footnote{Consider the homotopy $[0,1]\ni s\mapsto \varphi^{\tau(1-s)}$ from $\varphi^{\tau}$ to $\varphi^0=\mathrm{id}$.}. To that end, consider the map $\mathbb{R}\ni x \mapsto f(x):=-\alpha x$, for some $\alpha\in (0,1)$, and the define the mapping torus of $(\mathbb{R},f)$ through $M_f := \{(x,t)\in \mathbb{R}\times [0,1]\}/{\sim}$ for $(x,1)\sim (f(x),0)$. Now, the \textit{suspension} of $f$ (under the ceiling function $c(x)\equiv 1$) is the semiflow $\varphi(\cdot;f)$ defined through $\varphi^t((x,s);f):=(f^n(x),s')$ where the pair $(n,s')$ satisfies $n+s'=t+s$ with $0\leq s'\leq 1$~\cite[Sec. 1.11]{ref:brin2002introduction}. It follows that $\{0\}\times [0,1]/{\sim}$ is an attractor. \exampleEnd
    \end{upshape}
\end{example}

\noindent{{Open problem 3}}: \textit{prove or disprove that Proposition~\ref{prop:GAS:homotopy:semiflow} holds for all semiflows that render $0$ GAS ({not just the ones generated by vector fields).}}

Section~\ref{sec:OT} touched upon connections with OT. Motivated by work in the context of geometry processing~\cite{ref:solomon2019optimalV}, we believe that more work is warranted.  

\noindent{{Open problem 4}}: \textit{elucidate what OT can tell us about the existence of stability preserving homotopies on the level of vector fields, and vice versa.}

Our work also benefits from more explicit results, \textit{e.g.}, see~\cite{ref:bramburger2021deep}.  

\appendix

\section{Continuation and the Conley index}
\label{sec:Conley}
We aim to contribute to better understanding to what extent Hopf's degree theorem~\cite[p. 51]{ref:milnor65} extends to equivalence classes of dynamical systems. The most fruitful framework in this regard is Conley's theory~\cite{ref:conley1978isolated} and we briefly highlight his index theory to elucidate our central question. 

In the context of semiflows, Conley's index theory can be set up as follows~\cite[Ch. 1]{ref:rybakowski1987homotopy}. Given a semiflow $\varphi$ on $M^n$, then, $S\subseteq M^n$ is said to be an \textit{isolated (positively) invariant set} when there is a compact set $K\subseteq M^n$, called an \textit{isolating neighbourhood}, such that $S = \mathrm{I}(K,\varphi):=\{p\in K:\varphi^t(p)\in K\,\forall t\geq 0\}\subseteq \mathrm{int}(K)$. Now, a pair of compact sets $(N,L)\subset M^n\times M^n$ is said to be an \textit{index pair} for $S$ when (i) $S=\mathrm{I}(\mathrm{cl}(N\setminus L),\varphi)$ and $N\setminus L$ is a neighbourhood of $S$; (ii) $L$ is positively invariant in $N$; and (iii) $L$ is an exit set for $N$.
Then, the \textit{Conley index} of $S$ is the homotopy type of the pointed space $(N/L,[L])$. This index is independent of the choice of index pair. Now, if some $N$ can be chosen to be an isolating neighbourhood \textit{throughout} a homotopy $[0,1]\ni s \mapsto \varphi(\cdot;s)$, then, the Conley index is preserved along that homotopy and we speak of \textit{{continuation}} (of the index). The existence of the homotopy implies continuation, but to what extent does an equivalent Conley index imply the existence of an index preserving homotopy? This is the starting point of this note and this is why we speak of a ``\textit{converse question}''. 

{
\section{Added in proof}
Recently, Kvalheim and Sontag provided a generalized Hartman-Grobman theorem that allows for resolving open problem 2 \cite[Thm. 2]{ref:kvalheim2025global} (\textit{e.g.}, first homotope to a smooth flow), see also \cite{ref:jongeneel2025HG}. Moreover, Kvalheim addressed the extension to vector fields with outstanding generality \cite{ref:kvalheim2025differential}, see \cite[\S{8.4}]{ref:kvalheim2025differential} for an example that affirms our hopes at the end of Section \ref{sec:further}.
}

\pagestyle{basicstyle}
\addcontentsline{toc}{section}{Bibliography}
\subsection*{Bibliography}
\printbibliography[heading=none]


\end{document}